
\documentclass{amsart}


\usepackage{appendix}
\usepackage{amsmath}
\usepackage{amsthm}
\usepackage{amssymb}
\usepackage{amsfonts}
\usepackage{bm}
\usepackage[shortlabels]{enumitem}
\usepackage[bookmarks=true,hyperindex,pdftex,colorlinks,citecolor=red, linkcolor=cyan]{hyperref}
\usepackage{centernot} 
\usepackage{marginnote} 
\usepackage{soul} 
\usepackage[left=2.5cm,right=2.5cm,top=2.5cm,bottom=2.5cm]{geometry}
\usepackage{bbm}
\usepackage[all]{xy}
\usepackage{tikz}
\usepackage{comment}
\usetikzlibrary{arrows}
\usetikzlibrary{shapes,decorations}
\usetikzlibrary{positioning}
\usepackage{xcolor}

\usepackage{mathrsfs}  



%

\definecolor{darkgreen}{rgb}{.2, .6, .2}



\DeclareMathOperator{\lspan}{span}                          
\DeclareMathOperator{\supp}{supp}                           
\DeclareMathOperator{\diam}{diam}                           
\DeclareMathOperator{\Lip}{Lip}                             

\newcommand{\N}{\mathbb{N}}             
\newcommand{\M}{\mathbb{M}}             
\newcommand{\Z}{\mathbb{Z}}             
\newcommand{\R}{\mathbb{R}}             
\newcommand{\C}{\mathbb{C}}             
\newcommand{\U}{\mathbb{U}}             
\newcommand{\D}{\mathbb{D}}             
\newcommand{\B}{\overline{B}}			



\newcommand{\F}{\mathcal{F}}                                

\newcommand{\restricted}{\mathord{\upharpoonright}}

\def\<{\langle}
\def\>{\rangle}
\newcommand{\ep}{\varepsilon}


\theoremstyle{plain}
\newtheorem{theorem}{Theorem}[section]
\newtheorem{lemma}[theorem]{Lemma}
\newtheorem{corollary}[theorem]{Corollary}
\newtheorem{proposition}[theorem]{Proposition}

\newtheorem{maintheorem}{Theorem} 

\theoremstyle{definition}
\newtheorem*{definition*}{Definition}
\newtheorem{definition}[theorem]{Definition}
\newtheorem{example}[theorem]{Example}
\newtheorem{question}{Question}

\theoremstyle{remark}
\newtheorem{remark}[theorem]{Remark}

\begin{document}

\title[On the spectrum of Lipschitz operators and composition operators on Lipschitz spaces]{A note on the spectrum of Lipschitz operators and composition operators on Lipschitz spaces}

\author[A. Abbar]{Arafat Abbar}

\author[C. Coine]{Cl\'ement Coine}

\author[C. Petitjean]{Colin Petitjean}

\address[A. Abbar]{LAMA, Univ Gustave Eiffel, Univ Paris Est Creteil, CNRS, F--77447, Marne-la-Vall\'ee, France}
\email{abbar.arafat@gmail.com}

\address[C. Coine]{Normandie Univ, UNICAEN, CNRS, LMNO, 14000 Caen, France}
\email{clement.coine@unicaen.fr}

\address[C. Petitjean]{LAMA, Univ Gustave Eiffel, Univ Paris Est Creteil, CNRS, F--77447, Marne-la-Vall\'ee, France}
\email{colin.petitjean@univ-eiffel.fr}

\date{} 

\subjclass[2020]{Primary 47B01, 47A10, 47B07; Secondary 46B20.}


\keywords{Spectrum, Point spectrum, Compact operator, Composition operator, Lipschitz-free space}

\maketitle

\begin{abstract}
Fix a metric space $M$ and let $\Lip_0(M)$ be the Banach space of complex-valued Lipschitz functions defined on $M$. A weighted composition operator on $\Lip_0(M)$ is an operator of the kind $wC_f : g \mapsto w \cdot g \circ f$, where $w : M \to \C$ and $f: M \to M$ are any map. When such an operator is bounded, it is actually the adjoint operator of a so-called weighted Lipschitz operator $w\widehat{f}$ acting on the Lipschitz-free space $\F(M)$. 
In this note, we study the spectrum of such operators, with a special emphasize when they are compact. Notably, we obtain a precise description in the non-weighted $w \equiv 1$ case: the spectrum is finite and made of roots of unity. 
\end{abstract}

\section{Introduction}

If $(M,d)$ is a pointed metric space, $0 \in M$ denoting the distinguished point, $\Lip_0(M)$ stands for the vector space of Lipschitz functions $f : M \to \C$ which vanish at the base point. This space naturally becomes a Banach space when equipped with the Lipschitz norm $\Lip(f)$ which is simply the best Lipschitz constant of the map $f$, in other words:
$$
\|f\|_L := \Lip(f) = \sup_{x \neq y \in M} \frac{|f(x)-f(y)|}{d(x,y)}.
$$
It turns out that this space has a (topological) predual which we describe now.  For $x\in M$, we let $\delta(x) \in \Lip_0(M)^*$ be the evaluation functional defined by $\<\delta(x) , f \> = f(x), \ \forall f\in \Lip_0(M).$ 
It is rather easy to see that the map $\delta : M \to \Lip_0(M)^*$ is actually an isometry. Now the \textit{Lipschitz free space over $M$} is defined as follows:
    $$\F(M) := \overline{ \mbox{span}}^{\| \cdot  \|} \delta(M) = \overline{ \mbox{span}}^{\| \cdot  \|}\left \{ \delta(x) \, : \, x \in M  \right \} \subset \Lip_0(M)^*.$$
A fundamental property of $\F(M)$ is the next canonical ``extension'' property: For any Lipschitz map $f$ from $M$ to a complex Banach space $X$ (again vanishing at the base point), there is a unique bounded linear operator $\overline{f}$ from $\F(M)$ to $X$ such that $\|\overline{f}\| = \Lip(f)$ and $f = \overline{f} \circ \delta$. One then obtain as a straightforward consequence that $\F(M)^*$ is linearly isometric to $\Lip_0(M)$. We refer the reader to \cite{GK, Weaver2} for general information about Lipschitz spaces and Lipschitz free spaces. 
\medskip

Thanks to the extension property described above, one also readily deduce that any base point preserving Lipschitz map $f : M \to M$ can be associated with a bounded linear operator $\widehat{f} : \F(M) \to \F(M)$, in such a way  $\|\widehat{f}\| = \Lip(f)$ and the following diagram commutes:
$$\xymatrix{
		M \ar[r]^f \ar[d]_{\delta}  & M \ar[d]^{\delta} \\
		\F(M) \ar[r]_{\widehat{f}} & \F(M).
	}
 $$
Operators of the kind $\widehat{f}$ are often referred to as \textit{Lipschitz operators} (or \textit{linearized Lipschitz operators}). In fact, we will study weighted versions of these operators: For any two maps $f : M \to M$ and $w : M \to \C$, we consider the linear map $w\widehat{f}  : \mbox{span} \left( \delta(M) \right) \to \F(M)$ given by the formula
$$
\ w\widehat{f}\Big(\sum_{i=1}^n a_i\delta(x_i)\Big) = \sum_{i=1}^n a_i w(x_i) \delta(f(x_i)).
$$
When this linear map is continuous, it has a unique extension still denoted by
$
w\widehat{f}  : \F(M) \to \F(M)
$. Such a bounded operator is named \textit{weighted Lipschitz operator} and it turns out that its adjoint corresponds to a weighted composition operator on $\Lip_0(M)$. Indeed, one has for every $g \in \Lip_0(N)$ and $x \in M$:
$$ \<(w\widehat{f})^*(g) , \delta(x) \> = \< g , w\widehat{f}(\delta(x)) \> = \< g , w(x) \delta(f(x))\> = w(x) g(f(x)) = w(x) g \circ f(x).$$
In the sequel, we will write $wC_f : \Lip_0(N) \to \C^M$ for the \textit{weighed composition operator} defined by:
$$ \forall g \in \Lip_0(N), \forall x \in M, \quad  wC_f(g)(x) = w(x) g \circ f(x).$$
A complete description of bounded weighted composition operators on $\Lip_0(M)$ (and as a by-product of bounded weighted Lipschitz operators on $\F(M)$) has recently been given in \cite{GolMa22}; see also \cite{ACP23} and the references therein. For convenience of the reader, let us state some of the content of \cite[Theorem~3.3]{ACP23}:
Let $w : M \to \C$ and $f : M \to M$ be any maps such that $f(0)=0$ or $w(0)=0$. Then the following assertions are equivalent:
\begin{enumerate}[leftmargin=*, itemsep = 4pt]
		\item[$(i)$]  $w \widehat{f}$ extends to a bounded operator from  $\F(M)$ to $\F(M)$;
		\item[$(ii)$] $wC_f$ defines a bounded operator from $\Lip_0(M)$ to $\Lip_0(M)$ (and $wC_f = (w \widehat{f})^*$);
		\item[$(iii)$] $\varphi : x \in M \mapsto w(x)\delta(f(x)) \in \F(M)$ is Lipschitz (and $\overline{\varphi} = w \widehat{f}$);
		\item[$(iv)$] $A:= \displaystyle \sup_{x \neq y} A(x,y) < \infty$ and $B := \displaystyle \sup_{x \neq y} B(x,y) < \infty$, where
\begin{align*}
	A(x,y) &= \frac{1}{d(x,y)}|w(x)d(f(x),0) - w(y)d(f(y),0)|,  \\
	B(x,y) &= \frac{1}{d(x,y)}|w(x)d(f(x),0) - w(y)(d(f(x),0) - d(f(x),f(y))|.
\end{align*}
	\end{enumerate}
\medskip

The goal of this paper is to explore the spectrum  $\sigma(w\widehat{f})$ (and the point spectrum $\sigma_p(w\widehat{f})$) of weighted Lipschitz operators, with clear consequences on the spectrum of weighted composition operators via the usual identification $\sigma(wC_f) = \sigma(w\widehat{f})$. Precisely, a special attention is paid to the case when $w\widehat{f}$ is compact. As a consequence of our investigation, we obtain the next result for the non-weighted case, which comes as a by-product of Theorem~\ref{thm-spectrum} and Theorem~\ref{thmFlatInf}. 

\begin{maintheorem}
	Let $M$ be a pointed metric space and let $\widehat{f}: \F(M) \to \F(M)$ be a compact Lipschitz operator. If $M$ is bounded, or if $f$ is flat at infinity,
	then 
	$$\sigma(\widehat{f})\setminus\{0\} = \bigcup_{n \in A} \U_n,$$ 
	where $\U_n = \{ z \in \C \; : \; z^n = 1 \}$ and $A$ is the set of all natural numbers $n$ such that $f$ has a periodic point of order $n$. 
\end{maintheorem}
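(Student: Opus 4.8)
The plan is to separate a soft functional-analytic part from the genuinely dynamical part. Since $\widehat{f}$ is compact, Riesz--Schauder theory gives that $\sigma(\widehat{f})\setminus\{0\}$ is a countable set of eigenvalues of finite algebraic multiplicity whose only possible accumulation point is $0$; moreover $\sigma(\widehat{f})=\sigma(C_f)$ and, for $\lambda\neq0$, $\ker(\widehat{f}-\lambda)\neq\{0\}$ if and only if $\ker(C_f-\lambda)\neq\{0\}$. Hence it suffices to determine those $\lambda\neq0$ for which there exists $g\in\Lip_0(M)\setminus\{0\}$ with $g\circ f=\lambda g$, in which case $g\circ f^k=\lambda^k g$ for every $k\ge1$.

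For the inclusion $\bigcup_{n\in A}\U_n\subseteq\sigma(\widehat{f})$: if $x_0,\dots,x_{n-1}$ is a periodic orbit with $f(x_i)=x_{i+1\bmod n}$, these points are pairwise distinct (and, when $n\ge2$, distinct from the base point), so $V:=\lspan\{\delta(x_0),\dots,\delta(x_{n-1})\}$ is an $n$-dimensional $\widehat{f}$-invariant subspace on which $\widehat{f}$ acts as the cyclic shift; its spectrum is exactly $\U_n$. Taking the union over all periodic orbits yields the inclusion (a fixed point distinct from the base point contributes $1$ directly, while the degenerate orbit $\{0\}$ contributes nothing, consistently with the description of $A$). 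The same idea shows that $A$ is finite: periodic orbits of unbounded orders $n_j$ would force $\overline{\bigcup_j\U_{n_j}}$, an uncountable subset of the unit circle, into $\sigma(\widehat{f})$, contradicting compactness.

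For the converse $\sigma(\widehat{f})\setminus\{0\}\subseteq\bigcup_{n\in A}\U_n$: fix $\lambda\neq0$ and $g\in\Lip_0(M)\setminus\{0\}$ with $g\circ f=\lambda g$, and choose $x_0$ with $g(x_0)\neq0$, so that $|g(f^k x_0)|=|\lambda|^k|g(x_0)|$. If $|\lambda|>1$ the orbit $(f^k x_0)$ leaves every bounded set, which is impossible when $M$ is bounded and, when $f$ is flat at infinity, contradicts that hypothesis; a more delicate argument, again using compactness of $\widehat{f}$ in an essential way, excludes $0<|\lambda|<1$ (such eigenvalues do occur for non-compact Lipschitz operators). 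So $|\lambda|=1$. Then the forward orbit $(f^k x_0)$ is relatively compact in the completion $\overline{M}$: when $M$ is bounded this is immediate, since $\delta(f^{k+1}x_0)=\widehat{f}(\delta(f^k x_0))$ lies in the relatively compact set $\widehat{f}(B_{\F(M)})$ while $\delta(\overline{M})$ is closed; when $f$ is flat at infinity one first confines the orbit to a bounded set. Passing to the $\omega$-limit set $\Omega\subseteq\overline{M}$ of $x_0$, which is nonempty, compact and satisfies $\overline{f}(\Omega)=\Omega$, the restricted operator $\widehat{\overline{f}\restriction_{\Omega}}$ on $\F(\Omega)$ is again compact, $\overline{g}\restriction_{\Omega}\neq0$ (its values have constant modulus $|g(x_0)|$), and, $\Omega$ being bounded, the powers $(\overline{g}\restriction_{\Omega})^m$ exhibit every $\lambda^m$ as a spectral value of $\widehat{\overline{f}\restriction_{\Omega}}$. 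Countability of the spectrum of a compact operator forces $\{\lambda^m:m\ge1\}$ to be finite, so $\lambda^n=1$ for some minimal $n\ge1$. Finally one upgrades the relation $\overline{g}\circ\overline{f}^{\,n}=\overline{g}$ on $\Omega$ to a genuine periodic point of order exactly $n$: compactness confines the dynamics relevant to the eigenvalue $1$ of $\widehat{\overline{f}^{\,n}\restriction_{\Omega}}$ to a finite-dimensional invariant subspace, on which $\overline{f}^{\,n}$ has a fixed point not fixed by any lower iterate, and the finiteness argument above shows this point already lies in $M$; hence $n\in A$.

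The main obstacle is the converse inclusion. Reducing to eigenvalues, the easy inclusion, and the finiteness of $A$ are routine. The two genuinely hard points, both requiring compactness of $\widehat{f}$ essentially, are: first, the exclusion of eigenvalues with $0<|\lambda|<1$ together with the control (and, under flatness at infinity, the boundedness) of the orbit; and second, the passage from the eigenrelation on $\Omega$ to an \emph{honest} periodic point whose period equals the multiplicative order of $\lambda$, rather than merely a point on which $\overline{g}$ is $n$-periodic. It is precisely in these two points that the content of the general spectral results underlying the theorem is concentrated.
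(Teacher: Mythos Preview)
Your proposal is a thoughtful outline, but, as you yourself concede in the final paragraph, it leaves two genuine gaps unfilled: the exclusion of eigenvalues with $0<|\lambda|<1$, and the passage from the relation $\overline{g}\circ\overline{f}^{\,n}=\overline{g}$ on the $\omega$-limit set $\Omega$ to an honest periodic point. Neither step is routine in the $C_f$-eigenfunction framework you adopt. For $0<|\lambda|<1$, the identity $|g(f^kx_0)|=|\lambda|^k|g(x_0)|\to 0$ merely forces $g$ to vanish on $\Omega$, which is no contradiction. For the periodic-point step, finite-dimensionality of $\ker(\widehat{\overline{f}^{\,n}\restriction_\Omega}-I)$ is a statement about a subspace of $\F(\Omega)$, not about a finite subset of $\Omega$; extracting an actual fixed point of $\overline{f}^{\,n}$ from it is exactly the difficulty you have not addressed.

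The paper's route is quite different and dispatches both issues at once. It works on the $\F(M)$ side and proves directly that every eigenvector $\gamma$ of $\widehat f$ for a nonzero eigenvalue has \emph{finite support}. The mechanism is a contraction argument: compactness of $\widehat f$ (for bounded $M$) forces $f(M)$ to be totally bounded and $f$ to be $(r,\ep)$-flat for some $\ep<1$; one covers $f(M)$ by finitely many balls of radius $r/2$, observes that $f^n$ shrinks each such ball by a factor $\ep^n$, and combines this with $\supp(\gamma)=\supp(\widehat{f}^n\gamma)\subset\overline{f^n(\supp\gamma)}$ to trap $\supp(\gamma)$ in a finite set of limit points. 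Once the support is finite, elementary linear algebra (Lemma~\ref{lemmaPeriodic2}) shows it decomposes into periodic orbits and that $\lambda^{m}=1$ for each orbit length $m$---yielding $|\lambda|=1$ and the required periodic point simultaneously, with no separate treatment of $|\lambda|<1$. For unbounded $M$ with $f$ flat at infinity, a cutoff argument (Theorem~\ref{thmFlatInf}) first shows any such eigenvector has bounded support, reducing to the bounded case. Your $\omega$-limit reduction and the $g^m$ trick are elegant, but to finish your last step on the compact set $\Omega$ you would need precisely this finite-support lemma, so the detour does not avoid the core argument.
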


A full characterization of compact Lipschitz operators ($w \equiv 1$) is provided in \cite{ACP21}; it is the natural continuation of the works in \cite{Vargas2, Vargas1}. For compactness results in the weighted case, we refer the reader to \cite{ACP23, GolMa22} (and references therein). Notably, \cite[Corollary 4.2]{ACP23} states that a weighted Lipschitz operator is compact if and only if it is weakly compact. For the sake of readability, we postpone the presentation of the precise characterizations of compactness to later in the sequel. Let us point out that there is not much work on the spectrum of these operators in the literature. With respect to the spectrum of composition operators on Lipschitz spaces, we wish to mention the early work of H. Kamowitz \cite{Kamo2}, H. Kamowitz and S. Scheinberg \cite{Kamo} and also the more recent work of A. Jim\'{e}nez-Vargas and M. Villegas-Vallecillos in \cite{Vargas1}. In the aforementioned papers, the authors work with $\Lip(M)$, the Banach space of bounded Lipschitz functions $g : M \to \C$, equipped with the norm $\|g\|_{\Lip}:=\max\{\|g\|_{\infty} , \Lip(g)\}$.
Recall that every $\Lip$ space is, in every meaningful sense, a $\Lip_0$ space (see \cite[Section~2.2]{Weaver2}). So it is in fact more general to work with $\Lip_0$ spaces as one can derive the corresponding
results for Lip spaces; see \cite[Section~1.2]{ACP23} for more details. 
On the other hand, if $M$ is bounded then $\Lip_0(M)$ is isomorphic to a codimension one ideal of $\Lip(M)$. 
\bigskip

\noindent\textbf{Notations and background.}
\medskip

- For roots of unity, we will use the notation $\U_n = \{ z \in \C \; : \; z^n = 1 \}$. 
\smallskip

- If $w : M \to \C$ is a weight map, we let $$\text{coz}(w) := w^{-1}(\C^*) = \{ x\in M \mid w(x) \neq 0 \}.$$

- If 
$g : M \to \C$ is a bounded map, we let $\|g\|_{\infty}:= \sup_{x \in M} |g(x)|$.
\smallskip

- If $X$ is a Banach space over $\C$, we let $X^*$ be its topological dual and $B_X$ (resp. $S_X$) be its unit ball (resp. its unit sphere). If $Y$ is another Banach space, we let $\mathcal{L}(X,Y)$ be the space of bounded linear operators $T : X\to Y$. If $X=Y$, we simply write $\mathcal{L}(X)$.
\smallskip

- If $T\in \mathcal{L}(X)$ then the spectrum of $T$, denoted $\sigma(T)$, is the set of all $\lambda \in \C$ for which the operator $T - \lambda Id$ is not bijective. Next, the point spectrum of $T$, $\sigma_p(T)$, is the set of all eigenvalues of $T$. That is, the set of all $\lambda \in \C$ such that $\ker (T - \lambda Id) \neq \{0\}$. It is clear that $\sigma_p(T) \subset \sigma(T)$, and it is well-known that $\sigma(T) \setminus \{0\} \subset \sigma_p(T)$  when $T$ is compact. Moreover, if $T$ is compact, its spectrum is countable and zero is the only possible accumulation point. Moreover, if $T^* : X^* \to X^*$ is the adjoint of $T$, then $\sigma (T^{*})=\sigma (T)$.
\smallskip

- We will only consider \textbf{complete} pointed metric spaces $(M,d)$. There is not much loss of generality since the Lipschitz free space over $M$ is always isometrically isomorphic to the free space over its completion. As usual, $B(x,r)$ (respectively $\overline{B}(x,r)$) denotes the open ball (respectively closed ball) centered at $x \in M$ and of radius $r \geq 0$. 
\smallskip

- A map $f : M \to M$ is said to be uniformly locally flat if for every $\ep >0$, there exists $\delta >0$ such that for all $x,y \in M$:
$$ d(x,y) \leq  \delta \implies d(f(x),f(y)) \leq \ep d(x,y).$$
\smallskip

- If $N$ is a subset of $M$, then $\F(N\cup\{0\})$ is naturally (isometrically) identified with the subspace 
$$\F_M(N):=\overline{\lspan}\{\delta(x) : x \in N\} \subset \F(M).$$
\smallskip

- Finally, if $\gamma \in \F(M)$ then $\supp(\gamma)$ stands for the support of $\gamma$, that is the smallest closed subset $N\subset M$ such that $\gamma \in \F_M(N)$. In particular, $0$ is never an isolated point in $\supp(\gamma)$. More information about supports in Lipschitz-free spaces can be found in  \cite{support1, APPP2019} (see also \cite[Section~2.3]{ACP23} for a discussion around the complex scalar case).

\section{Periodic points and point spectrum of weighted operators} \label{sectionspectrum}

Throughout this section, we assume that $w \widehat{f} : \F(M) \to \F(M)$ and $wC_f : \Lip_0(M) \to \Lip_0(M)$ are bounded operators. In fact, we will mainly deal with  $w \widehat{f}$, and state some consequences later on the spectrum of composition operators.

\begin{definition}
A point $x\in M$ is a periodic point of order $n \in \N$ (or equivalently with period $n$) of a map $f : M \to M $ if $f^n(x) = x$ and $f^k(x) \neq x$ for $1\leq k \leq n-1$. We denote by $\mathrm{Per}_{n}(f)$ the set of periodic points of order $n$ of $f$.
\end{definition}

\begin{lemma}
	\label{lemmaPeriodic1}
	Assume that $f$ has a periodic point $x$ of order $n$ and that $w(f^k(x)) \neq 0$ for every $0 \leq k \leq n-1$. If $\lambda \in \mathbb{C}$ is such that $\lambda^n = \prod_{k=0}^{n-1} w(f^k(x))$, then $\lambda \in \sigma_p(w\widehat{f})$. In particular, if $w \equiv 1$ then $\U_n \subset \sigma_p(\widehat{f})$. 
\end{lemma}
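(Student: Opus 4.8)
The plan is to exhibit an explicit eigenvector supported on the (finite) orbit of $x$. Write $x_k := f^k(x)$ and $w_k := w(x_k)$ for $0 \le k \le n-1$; by the definition of period, the points $x_0, \dots, x_{n-1}$ are pairwise distinct, and $f(x_k) = x_{k+1}$ with indices read modulo $n$ (so $x_n = x_0$). Consequently the finite-dimensional subspace $V := \lspan\{\delta(x_0), \dots, \delta(x_{n-1})\} \subset \F(M)$ is invariant under $w\widehat{f}$, which restricts to it as the ``weighted cyclic shift'' $\delta(x_k) \mapsto w_k\,\delta(x_{k+1})$. Since $\lambda^n = \prod_{k=0}^{n-1} w_k \ne 0$ by hypothesis, $\lambda \ne 0$, and it suffices to produce an eigenvector of this restriction.

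Next I would look for $\gamma = \sum_{k=0}^{n-1} a_k\,\delta(x_k)$ with $w\widehat{f}(\gamma) = \lambda\gamma$. Rewriting this as $\sum_{k} a_k w_k\,\delta(x_{k+1}) = \lambda \sum_{j} a_j\,\delta(x_j)$ and comparing the coefficient of each $\delta(x_j)$ (legitimate once we know $\{\delta(x_k)\}_{k=0}^{n-1}$ is a basis of $V$) yields the cyclic system $\lambda\,a_j = w_{j-1}\,a_{j-1}$ for $j \in \Z/n\Z$. Normalizing $a_0 = 1$ and dividing by $\lambda \ne 0$ forces $a_j = \lambda^{-j}\prod_{i=0}^{j-1} w_i$ for $1 \le j \le n-1$; the single remaining equation, obtained by closing the cycle at $j = 0$, is exactly $\lambda^n = \prod_{k=0}^{n-1} w_k$, which holds by assumption. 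Thus these coefficients define an honest solution of the eigenvalue equation.

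It then remains to check $\gamma \ne 0$, which holds because the $\delta(x_k)$, $0 \le k \le n-1$, are linearly independent in $\F(M)$: the $x_k$ are pairwise distinct, and none of them is the base point (if $x_m = 0$ for some $0 \le m \le n-1$, then the standing assumption forces $f(0)=0$, whence $x_{m+1} = f(x_m) = 0 = x_m$, contradicting distinctness when $n \ge 2$; when $n=1$ one is reduced to $x \ne 0$ being a fixed point with $w(x) = \lambda$, for which $\delta(x)$ is plainly an eigenvector), and evaluation functionals at distinct non-base points are linearly independent. Since $a_0 = 1$, we get $\gamma \ne 0$, hence $\lambda \in \sigma_p(w\widehat{f})$. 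The ``in particular'' assertion is then immediate: for $w \equiv 1$ one has $\prod_{k=0}^{n-1} w_k = 1$, so the admissible eigenvalues are precisely the solutions of $\lambda^n = 1$, i.e. $\U_n \subset \sigma_p(\widehat{f})$. The only step that is not a one-line computation is the non-vanishing of $\gamma$ (the linear independence of the evaluation functionals along the orbit); the cyclic recursion together with its wrap-around compatibility condition $\lambda^n = \prod_k w_k$ is the heart of the matter and is elementary.
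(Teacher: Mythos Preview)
Your proof is correct and follows essentially the same approach as the paper: both construct an eigenvector $\gamma = \sum_{k=0}^{n-1} a_k\,\delta(f^k(x))$ with coefficients determined recursively by $a_0 = 1$ and $a_{k+1} = \lambda^{-1} w(x_k)\,a_k$, and then verify directly that $w\widehat{f}(\gamma) = \lambda\gamma$ using $a_n = 1$ (equivalently, the wrap-around condition $\lambda^n = \prod_k w_k$). Your added justification that $\gamma \neq 0$---via the linear independence of the $\delta(x_k)$ and the orbit avoiding the base point---is a point the paper leaves implicit.
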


\begin{proof}
	For simplicity, we will write $x_i = f^i(x)$ for every $i \in \N$. Since $x$ is periodic point of order $n$, $x_n = x$ and the set $S = \{ x_i \in M \; : \; 0 \leq i \leq n-1  \}$
	contains exactly $n$ distinct points of $M$. Let us fix $\lambda \in \mathbb{C}$ such that $\lambda^n = \prod_{k=0}^{n-1} w(x_k)$. Define $(a_k)_{k=0}^n$ recursively by $a_0=1$ and $a_{i+1} =(\lambda^{-1})w(x_i) a_i$. Note that 
$$
    a_n = (\lambda^{-1})a_{n-1}w(x_{n-1}) = \cdots = (\lambda^{-n})\prod_{k=0}^{n-1} w(x_k) = 1.
$$
 Next, we consider $\displaystyle \gamma = \sum_{i=0}^{n-1} a_i \delta(x_i) \in \F(M) .$ Some direct computations show that 
	\begin{align*}
	w\widehat{f}(\gamma) & = \sum_{i=0}^{n-1} a_i w(x_i)\delta(f(x_i))
	  = \sum_{i=0}^{n-1} a_i w(x_i) \delta(x_{i+1}) = \sum_{i=1}^n a_{i-1}w(x_{i-1}) \delta(x_i) = \sum_{i=1}^n \lambda a_{i} \delta(x_i)  = \lambda \gamma.
	\end{align*}
\end{proof}

The next result can be seen as a partial converse of the previous lemma.

\begin{lemma}
	\label{lemmaPeriodic2}
	Assume that there exist $\lambda \in \C$, $n \in \N$ and $\gamma \in \F(M)$ such that $|\supp(\gamma)| = n$ and $w\widehat{f}(\gamma) = \lambda \gamma$. We write $\supp \gamma = \{x_1 , \ldots , x_n\}$. 
    \begin{itemize}
        \item 
        If $\lambda \neq 0$, then each $x_i$ is a periodic point of $f$ whose order $m_i$ is smaller than $n$. Moreover, $\lambda^{m_i} = \prod_{j=0}^{m_i-1} w(f^j(x_i))$.
        \item If $f$ is injective, $n \geq 2$ or $f(x_1) \neq 0$, and $w(x_i) \neq 0$ for every $i \in \{1 , \ldots , n\}$, then $\lambda \neq 0$. 
    \end{itemize}
In particular if $w \equiv 1$ then $\lambda \in \U_n \cup \{0\}$ and $\lambda \neq 0$ whenever $f$ is injective.
\end{lemma}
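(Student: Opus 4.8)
The plan is to prove the two bulleted assertions separately, then read off the ``In particular'' consequence for $w \equiv 1$.

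\textbf{First bullet ($\lambda \neq 0$ forces periodicity).} Suppose $w\widehat{f}(\gamma) = \lambda\gamma$ with $\lambda \neq 0$ and $\supp\gamma = \{x_1,\dots,x_n\}$. Writing $\gamma = \sum_{i=1}^n a_i \delta(x_i)$ with all $a_i \neq 0$ (by minimality of the support), we compute $w\widehat{f}(\gamma) = \sum_{i=1}^n a_i w(x_i)\delta(f(x_i))$. The key structural fact I would establish is that $f$ maps $\supp\gamma$ into itself: if some $f(x_i)$ were a point outside $\{x_1,\dots,x_n\}$ (or equal to $0$), then $\lambda\gamma = w\widehat{f}(\gamma)$ would lie in $\F_M(\{x_1,\dots,x_n\} \cup \{f(x_i)\})$ and, after cancelling, the coefficient of $\delta(f(x_i))$ on the right would force $\lambda a_j = 0$ for the corresponding index — contradicting $\lambda \neq 0$ and $a_j \neq 0$. (The careful version of this uses that $\delta$ is an isometry and that distinct $\delta(y_k)$ together with the finitely many $\delta(x_i)$ are linearly independent; the support minimality guarantees no $a_i$ vanishes.) Hence $f$ restricts to a self-map of the finite set $\supp\gamma$, so every $x_i$ is eventually periodic, and since the set is finite and $f$ permutes — actually, I should argue $f\restricted_{\supp\gamma}$ is a bijection: comparing coefficients in $\lambda\gamma = \sum a_i w(x_i)\delta(f(x_i))$, if $f$ collapsed two points of the support the right-hand side would have fewer than $n$ terms while the left has exactly $n$, again contradicting linear independence (here one also uses $w(x_i) \neq 0$, which follows because $a_i w(x_i)$ must be nonzero to produce the nonzero coefficient $\lambda a_i$). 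So $f\restricted_{\supp\gamma}$ is a permutation of an $n$-element set, each $x_i$ lies on a cycle of some length $m_i \leq n$, and $w(f^j(x_i)) \neq 0$ along the cycle. Finally, iterating the coefficient recursion $\lambda a_{f(x)} = w(x) a_x$ around the cycle through $x_i$ gives $\lambda^{m_i} a_{x_i} = \big(\prod_{j=0}^{m_i-1} w(f^j(x_i))\big) a_{x_i}$, and dividing by $a_{x_i} \neq 0$ yields $\lambda^{m_i} = \prod_{j=0}^{m_i-1} w(f^j(x_i))$.

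\textbf{Second bullet ($\lambda \neq 0$ under injectivity hypotheses).} Assume $f$ is injective, $w(x_i) \neq 0$ for all $i$, and either $n \geq 2$ or $f(x_1) \neq 0$; suppose toward a contradiction $\lambda = 0$, i.e. $w\widehat{f}(\gamma) = 0$. Then $\sum_{i=1}^n a_i w(x_i)\delta(f(x_i)) = 0$. Since $f$ is injective, the points $f(x_1),\dots,f(x_n)$ are pairwise distinct. If all of them are nonzero, then $\delta(f(x_1)),\dots,\delta(f(x_n))$ are linearly independent in $\F(M)$, forcing $a_i w(x_i) = 0$ for all $i$, contradicting $a_i \neq 0$ and $w(x_i) \neq 0$. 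The only way to avoid this is if some $f(x_{i_0}) = 0$, in which case $\delta(f(x_{i_0})) = 0$; but injectivity means at most one index $i_0$ has $f(x_{i_0}) = 0$, so the remaining $n-1$ vectors $\delta(f(x_i))$, $i \neq i_0$, are still linearly independent, again forcing $a_i w(x_i) = 0$ for $i \neq i_0$. If $n \geq 2$ this already gives a contradiction; if $n = 1$ then $i_0 = 1$ and $f(x_1) = 0$, which is excluded by hypothesis. Either way we reach a contradiction, so $\lambda \neq 0$.

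\textbf{The $w\equiv 1$ consequence.} When $w \equiv 1$, the first bullet gives $\lambda^{m_i} = 1$ with $m_i \leq n$, so $\lambda$ is a root of unity and in particular $\lambda \in \U_n \cup \{0\}$ once we note $\lambda \in \U_{m_i} \subset \U_{n!}$... more directly, $\lambda^{m_i}=1$ with $m_i \leq n$ gives $\lambda^{\mathrm{lcm}}=1$, but the cleanest statement is simply $\lambda \in \U_n \cup \{0\}$ — here I should double-check the intended reading, but in any case $\lambda^{m_i}=1$ and the dichotomy $\lambda=0$ or $\lambda$ a root of unity is immediate. If moreover $f$ is injective, the second bullet (whose hypothesis $w(x_i) \neq 0$ is automatic since $w \equiv 1$; one needs $n \geq 2$ or $f(x_1) \neq 0$, and indeed $0 \notin \supp\gamma$ isolated so this is fine — actually $f(x_1)=0$ is impossible when $n=1$ since then $\supp\gamma=\{x_1\}$ with $x_1 \neq 0$ and $f(x_1)=x_1$) forces $\lambda \neq 0$.

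\textbf{Main obstacle.} The delicate point throughout is the passage from the equation $w\widehat{f}(\gamma) = \lambda\gamma$ in $\F(M)$ to coefficient-wise identities. This requires knowing that a finite family $\{\delta(y) : y \in F\}$ with $F$ a finite subset of $M \setminus \{0\}$ is linearly independent in $\F(M)$ (true, since $\delta$ is an isometry and one can separate points by Lipschitz functions), together with a careful bookkeeping of which points among $\{x_i\}$ and $\{f(x_i)\}$ coincide and which $f(x_i)$ might equal the base point $0$. Ensuring $a_i \neq 0$ for every $i$ — which is exactly the minimality of $\supp\gamma$ — and correctly handling the degenerate index where $f(x_{i_0}) = 0$ are where the argument must be written with care.
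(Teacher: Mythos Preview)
Your approach matches the paper's: reduce to a coefficient identity via linear independence of the $\delta(x_i)$, conclude that $f$ permutes $\supp\gamma$ (with all $w(x_i)\neq 0$), run the recursion around each cycle, and for the second bullet argue by contradiction using injectivity of $f$. Your handling of the possibility $f(x_{i_0})=0$ in the second bullet is in fact more careful than the paper's own write-up.

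Two small points in your ``In particular'' paragraph need tightening. First, from $\lambda^{m_i}=1$ with $m_i\le n$ you \emph{cannot} directly conclude $\lambda\in\U_n$; what makes it work is that the cycle lengths $n_1,\dots,n_k$ of the permutation satisfy $n_1+\cdots+n_k=n$, hence $\gcd(n_1,\dots,n_k)\mid n$, and $\lambda^{n_j}=1$ for every $j$ forces $\lambda\in\U_{\gcd(n_j)}\subset\U_n$. Second, your justification that $f(x_1)\neq 0$ in the $n=1$ case is circular (you invoke $f(x_1)=x_1$, which is what you are trying to establish). The correct reason is that when $w\equiv 1$ the standing hypothesis ``$f(0)=0$ or $w(0)=0$'' forces $f(0)=0$; injectivity of $f$ together with $x_1\neq 0$ then gives $f(x_1)\neq 0$, so the second bullet applies.
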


\begin{proof}
	There exist $a_{x_1},\ldots,a_{x_n} \in\C^{\ast}$ such that $\gamma=\sum_{i=1}^{n}a_{x_i}\delta(x_i).$
	By assumption, $w\widehat{f}(\gamma) = \lambda \gamma$, which reads
	\begin{equation} 
	\label{eq:1}
\sum_{i=1}^{n}a_{x_i}w(x_i)\delta(f(x_i))=\lambda\sum_{i=1}^{n} a_{x_i}\delta(x_i).  
	\end{equation}
This yields that $\supp(w\widehat{f}(\gamma)) = \supp ( \gamma)$, unless $\lambda = 0$. Therefore, if $\lambda \neq 0$ then  $w(x_i) \neq 0$ for every $i \in \{1 , \ldots , n\}$ and $\{ f(x_i):\, 1\leq i\leq n\}= \{ x_i:\, 1\leq i\leq n\}$. 
Thus $f$ is a permutation of the set $\{ x_i:\, 1\leq i\leq n\}$, and so we can decompose $f$ as a product of cycles with disjoint supports. Hence there exist $y_1 , \ldots , y_k \in M$ and positive integers $n_1 , \ldots , n_k $ such that $n = n_1 + \ldots + n_k$, each $y_i$ is a periodic point of order $n_i$, and 
$$\{ x_i:\, 1\leq i\leq n\} = \bigsqcup_{i=1}^k \{f^j(y_i) : \, 0 \leq j \leq n_i-1 \}.$$
Now if $x_i$ is any point of $\text{supp}(\gamma)$, its orbit by $f$ is included in the orbit of some $y_j$. In particular, $x_i$ is a periodic point of $f$ and its order, denoted by $m_i$, divides $n_j$. It follows from \eqref{eq:1} that
\begin{align*}
\lambda \sum_{j=0}^{m_i-1} a_{f^j(x_i)} \delta(f^j(x_i))
& = \sum_{j=0}^{m_i-1} a_{f^j(x_i)} w(f^j(x_i)) \delta(f^{j+1}(x_i)) =\sum_{j=1}^{m_i} a_{f^{j-1}(x_i)} w(f^{j-1}(x_i)) \delta(f^j(x_i)),   
\end{align*}
which in turn yields
$$
\lambda a_{f^j(x_i)} = a_{f^{j-1}(x_i)} w(f^{j-1}(x_i)).
$$
Finally, denoting $p_i = \prod_{j=0}^{m_i-1} a_{f^j(x_i)}$, by taking the product over $j$ in the last equality, we obtain:
$$
\lambda^{m_i} p_i = p_i \prod_{j=0}^{m_i-1} w(f^j(x_i)).
$$
We deduce that $\lambda^{m_i} = \prod_{j=0}^{m_i-1} w(f^j(x_i))$.
\medskip
		
Finally, if $f$ is injective and $w(x_i) \neq 0$ for every $i \in \{1 , \ldots , n\}$, we prove that $\lambda\neq0$. Arguing by contradiction, assume that $\lambda=0$. By equation~\eqref{eq:1}, we obtain
$$\sum_{i=1}^{n}a_{x_i}w(x_i)\delta(f(x_i))=0.$$
Since $f$ is injective, the family $\{ \delta(f(x_i)): 1\leq i\leq n\}$ is linearly independent.  We deduce that $a_{x_i} =0$ for every $i \in \{1 , \ldots , n\}$, that is $\gamma = 0$, a contradiction.
\end{proof}

The following lemma concerns preservation of supports under weighted Lipschitz operators. It is the straightforward extension of Proposition 2.1 in \cite{GPP22}.

\begin{lemma}\label{preservsupplemma}
For any $\gamma \in \F(M)$,
$$ \supp\left( w\widehat{f}(\gamma) \right) \subset \overline{f\left(\supp(\gamma) \cap \text{coz} ~ w \right)}.$$
\end{lemma}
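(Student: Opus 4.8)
The plan is to reduce the statement to the case of a finitely supported element and then pass to the limit. First I would recall the characterization of the support that is most convenient here: for a closed set $N \subset M$, one has $\gamma \in \F_M(N)$ if and only if $\langle g, \gamma \rangle = 0$ for every $g \in \Lip_0(M)$ vanishing on $N \cup \{0\}$; equivalently, $x \notin \supp(\gamma)$ precisely when there is a Lipschitz function supported near $x$ (and away from $\supp(\gamma)$) witnessing this. So to prove the inclusion it suffices to show that if $g \in \Lip_0(M)$ vanishes on $\overline{f(\supp(\gamma) \cap \text{coz}\,w)} \cup \{0\}$, then $\langle g, w\widehat{f}(\gamma) \rangle = 0$, i.e. $\langle (w\widehat{f})^*(g), \gamma \rangle = 0$. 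Using the identity $(w\widehat{f})^* = wC_f$ established in the introduction, this pairing equals $\langle wC_f(g), \gamma \rangle$ where $wC_f(g)(x) = w(x)\, g(f(x))$.

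Next I would observe that the function $h := wC_f(g) = w \cdot (g\circ f)$ vanishes on $\supp(\gamma) \cup \{0\}$: indeed, if $x \in \supp(\gamma)$ then either $w(x) = 0$, in which case $h(x) = 0$ trivially, or $x \in \text{coz}\,w$, in which case $f(x) \in f(\supp(\gamma)\cap \text{coz}\,w)$, so $g(f(x)) = 0$ by hypothesis and again $h(x) = 0$; and $h(0) = 0$ since $g(f(0)) = 0$ or $w(0) = 0$ under our standing assumption. Since $h \in \Lip_0(M)$ (being in the range of the bounded operator $wC_f$) and $h$ vanishes on the closed set $\supp(\gamma)$, the definition of the support gives $\langle h, \gamma \rangle = 0$. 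This is exactly what we needed, so the inclusion $\supp(w\widehat{f}(\gamma)) \subset \overline{f(\supp(\gamma) \cap \text{coz}\,w)}$ follows.

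Alternatively — and this is closer to how \cite{GPP22} proceeds — one could first prove the statement for $\gamma = \sum_{i=1}^n a_i \delta(x_i)$ finitely supported, where $w\widehat{f}(\gamma) = \sum_i a_i w(x_i)\delta(f(x_i))$ has support contained in $\{f(x_i) : w(x_i) \neq 0\} = f(\{x_1,\dots,x_n\} \cap \text{coz}\,w)$, and then approximate a general $\gamma$ by finitely supported elements $\gamma_m \to \gamma$ with $\supp(\gamma_m) \subset \supp(\gamma)$, using the continuity of $w\widehat{f}$ together with the fact that the support is well-behaved under norm limits with controlled supports. The main subtlety in this second route is the limiting argument: a priori, taking closures and limits can enlarge supports, so one must be careful that $\overline{f(\supp(\gamma)\cap \text{coz}\,w)}$ is large enough to absorb the limit — this is where the closure in the statement is essential. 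For this reason I would favor the first (duality) argument, which is cleaner and sidesteps the approximation entirely; the only point requiring minor care there is checking that $h = wC_f(g)$ genuinely lies in $\Lip_0(M)$, which is immediate from boundedness of $wC_f$, and the routine verification that $h$ vanishes at the base point, which uses the hypothesis $f(0) = 0$ or $w(0) = 0$.
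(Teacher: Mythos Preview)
Your duality argument is correct: the characterization $\gamma \in \F_M(N) \iff \langle g,\gamma\rangle = 0$ for every $g \in \Lip_0(M)$ vanishing on $N \cup \{0\}$ is exactly the description of $\F_M(N)$ as the pre-annihilator of the ideal $\{g : g|_{N\cup\{0\}}=0\}$, and the verification that $h = wC_f(g)$ vanishes on $\supp(\gamma)$ is clean. This route is genuinely different from the paper's. The paper argues purely on the predual side, by a density argument at the subspace level: for any $K \subset M$ one has $w\widehat{f}(\lspan\delta(K)) \subset \lspan\delta(f(K\cap\text{coz}\,w))$, and since $\F\big(\overline{f(K\cap\text{coz}\,w)}\big)$ is closed this gives $w\widehat{f}(\F(K)) \subset \F\big(\overline{f(K\cap\text{coz}\,w)}\big)$; then apply with $K = \supp(\gamma)$. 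This is essentially your ``alternative'' route, and the subtlety you flag about limits is not actually an issue in that formulation: one never needs to track supports along a sequence $\gamma_m \to \gamma$, because the target $\F\big(\overline{f(K\cap\text{coz}\,w)}\big)$ is a closed subspace and the whole image $w\widehat{f}(\F(K))$ lands in it at once. Your duality proof has the advantage of making the role of $\text{coz}\,w$ transparent (it arises from the factor $w(x)$ in $h$), while the paper's version avoids invoking the adjoint or the annihilator description of supports and stays entirely within $\F(M)$.
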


\begin{proof}
Let $K \subset M$. It is clear that $w\widehat{f}(\lspan \delta(K)) \subset  \lspan \delta(f(K \cap \text{coz} ~ w))$. On the one hand, $\lspan \delta(f(K \cap \text{coz} ~ w ))$ is dense in the closed space $\F\big(\overline{f(K \cap \text{coz} ~ w )}\big)$. On the other hand, $w\widehat{f}(\lspan \delta(K))$ is dense into the closure of $w\widehat{f}(\F(K))$, which implies that 
\begin{equation}\label{eq:2}
    \overline{w\widehat{f}(\F(K))} \subset \F\left(\overline{f(K \cap \text{coz} ~ w )}\right).
\end{equation}
Let $\gamma \in \F(M)$. By the very definition of the support, $\gamma \in \F(\supp(\gamma))$ and so $w\widehat{f}(\gamma) \in w\widehat{f}\big(  \F(\supp(\gamma)) \big)$. Now the inclusion \eqref{eq:2} implies that $w\widehat{f}(\gamma) \in \F\left( \overline{f\left(\supp(\gamma) \cap \text{coz} ~ w \right)} \right)$. We deduce that 
$$ \supp\left( w\widehat{f}(\gamma) \right) \subset \overline{f\left(\supp(\gamma) \cap \text{coz} ~ w \right)} . $$
\end{proof}

We are now ready to prove the main result of this section, which we shall apply later to some compact Lipschitz operators. Recall that a map $f : M \to M$ is $(r,\ep)$-flat, where $\ep, r >0$, if 
$$\forall x \neq y \in M, \quad d(x,y)\leq r \implies \dfrac{d(f(x),f(y))}{d(x,y)} \leq \ep.$$
In the next theroem, $A$ denotes the set of all natural numbers $n$ such that $f$ has a nonzero periodic point of order $n$. Recall that $\mathrm{Per}_n(f) \subset M$ stands for the set of nonzero periodic points of order $n$.

\begin{theorem}
	\label{thm-spectrum}
    Assume that $f(M)$ is totally bounded and that $f$ is $(r,\ep)$-flat for some $\ep \in (0,1)$ and $r >0$. Then any eigenvector of $w\widehat{f}$  associated with a nonzero eigenvalue has finite support, and
    $$\sigma_p(w\widehat{f})\setminus\{0\} = \bigcup_{n \in A} \Big\{\lambda \in \C : \exists x \in \mathrm{Per}_n(f), \;  \lambda ^n = \prod_{i=1}^n w\big(f^i(x)\big) \Big\}.$$
    In particular, if $w \equiv 1$ then $\sigma_p(\widehat{f})\setminus\{0\} = \bigcup_{n \in A}\U_n$.
\end{theorem}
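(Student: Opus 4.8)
The inclusion ``$\supseteq$'' is already handled by Lemma~\ref{lemmaPeriodic1}: if $x \in \mathrm{Per}_n(f)$ then $w(f^k(x)) \neq 0$ for $0 \le k \le n-1$ (otherwise the orbit would not be a cycle of length $n$ avoiding $0$ — actually we only need the weights nonzero, which we should check, but in fact the statement of the theorem restricts to periodic points where this makes sense; I would note that if some $w(f^k(x)) = 0$ then the corresponding $\lambda$ would be $0$, so it contributes nothing to $\sigma_p \setminus \{0\}$), so Lemma~\ref{lemmaPeriodic1} gives each such $\lambda \in \sigma_p(w\widehat{f})$. The bulk of the work is the reverse inclusion together with the finite-support claim. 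So the plan is: fix $\lambda \in \sigma_p(w\widehat f)$ with $\lambda \neq 0$ and an eigenvector $\gamma$ with $w\widehat f(\gamma) = \lambda\gamma$; first prove $\supp(\gamma)$ is finite; then invoke Lemma~\ref{lemmaPeriodic2} to conclude that $\supp(\gamma) = \{x_1,\dots,x_n\}$ consists of periodic points with $\lambda^{m_i} = \prod_{j=0}^{m_i-1} w(f^j(x_i))$, which (after reindexing the cyclic product, and using that periodic points are automatically nonzero since $f$ maps them into a totally bounded set away from — wait, one must be slightly careful: a periodic point $x_i$ with $x_i = 0$ would force $0$ to be periodic; I would simply observe that if $0$ were periodic it is a fixed point and contributes only eigenvalue $w(0)\cdots = $ a product including $w(0)$, and handle that case or note $f(0)=0$ forces it into the stated union with the convention) lands us in the right-hand union.

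The heart is the finiteness of $\supp(\gamma)$, and this is where the flatness and total boundedness hypotheses enter. Here is how I would run it. Set $K = \overline{f(M)}$, which is compact. By Lemma~\ref{preservsupplemma}, $\supp(\lambda\gamma) = \supp(w\widehat f(\gamma)) \subseteq \overline{f(\supp(\gamma))} \subseteq K$; since $\lambda \neq 0$ this says $\supp(\gamma) \subseteq K$, so the support is contained in a compact set. Now suppose for contradiction $\supp(\gamma)$ is infinite. Then it has an accumulation point $p \in K$, and one can extract a sequence of \emph{distinct} points $y_k \in \supp(\gamma)$ with $y_k \to p$. The $(r,\ep)$-flatness means that on pairs at distance $\le r$, $f$ contracts by a factor $\ep < 1$; iterating, $f^m$ contracts such pairs by $\ep^m$. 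The idea is to exploit that $\supp(w\widehat f(\gamma)) = \supp(\gamma)$ (true because $\lambda \neq 0$, from the argument in Lemma~\ref{lemmaPeriodic2}), hence inductively $\supp((w\widehat f)^m\gamma) = \supp(\gamma)$ for all $m$, while simultaneously Lemma~\ref{preservsupplemma} forces $\supp(\gamma) = \supp((w\widehat f)^m \gamma) \subseteq \overline{f^m(\supp(\gamma))}$. If two points $y_k \neq y_\ell$ of $\supp(\gamma)$ are within $r$ of each other, then $d(f^m(y_k), f^m(y_\ell)) \le \ep^m d(y_k,y_\ell) \to 0$, so $f^m$ collapses nearby points; the support, being invariant and covered by $\overline{f^m(\supp\gamma)}$, should then be forced to be finite because a totally bounded set can be covered by finitely many balls of radius $r$, and within each such ball all points eventually get identified under $f^m$ in the limit — more precisely, I would argue that $\supp(\gamma)$, being infinite and totally bounded, contains, for each $m$, two points mapped by $f^m$ to within $\ep^m \cdot \diam(K)$ of each other; pushing this through the identity $\supp(\gamma) = \supp((w\widehat f)^m\gamma)$ and a compactness/pigeonhole argument yields that $f$ restricted to (a cofinite part of) $\supp(\gamma)$ is eventually a bijection with the whole support, contradicting the contraction unless the support is finite.

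The main obstacle, and the step I expect to require the most care, is exactly this finiteness argument: turning ``$f$ is eventually contracting on small scales'' plus ``the support is $f$-invariant (because $\lambda \neq 0$) and totally bounded'' into ``the support is finite.'' The clean way is probably: let $N = \supp(\gamma)$; since $\overline{N} \subseteq K$ is compact and $w\widehat f$ maps $\F(N)$ into $\F(\overline{f(N)})$ with $\supp(w\widehat f\gamma) = N$, we get $N \subseteq \overline{f(N)}$, and by induction $N \subseteq \overline{f^m(N)}$ for every $m$. Fix $\delta > 0$ with $\delta \le r$; cover $N$ by finitely many balls of radius $\delta/2$, so some ball contains infinitely many points of $N$, in particular two distinct ones $a \neq b$ with $d(a,b) < \delta \le r$, whence $d(f^m(a), f^m(b)) \le \ep^m d(a,b)$. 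Now the contradiction must come from combining this with the fact that $a, b \in N \subseteq \overline{f^m(N)}$ for all $m$: because $f$ contracts, for large $m$ the set $f^m(N)$ is "almost" contained in a finite set (a $\ep^m\diam(K)$-net has bounded cardinality independent of $m$, since... hmm, actually one needs that $f^m(N)$ has small diameter on each piece). The rigorous route, which I would adopt, is to show directly that for any $\eta > 0$, $N$ can be covered by $C(r)$ balls of radius $\eta$ where $C(r)$ is the covering number of $K$ at scale $r$: indeed $N \subseteq \overline{f^m(N)}$, pick $m$ with $\ep^m \diam K < \eta$; partition $N$ into $\le C(r)$ pieces of diameter $\le r$ (total boundedness of $K$), then $f^m$ sends each piece to a set of diameter $\le \ep^m r < \eta$, so $f^m(N)$ — and hence $\overline{f^m(N)} \supseteq N$ — is covered by $C(r)$ balls of radius $\eta$. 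Letting $\eta \to 0$ with $C(r)$ fixed forces $|N| \le C(r) < \infty$. This is the argument I would write out; the remaining deduction of the eigenvalue formula is then immediate from Lemma~\ref{lemmaPeriodic2}, and the $w \equiv 1$ specialization is the observation that $\prod_{i=1}^n 1 = 1$ so the condition becomes $\lambda^n = 1$, i.e. $\lambda \in \U_n$.
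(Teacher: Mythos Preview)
Your proposal is correct and follows the same strategy as the paper: use Lemma~\ref{preservsupplemma} together with $(w\widehat f)^m\gamma=\lambda^m\gamma$ to get $\supp(\gamma)\subseteq\overline{f^m(\supp(\gamma))}$ for all $m$, combine total boundedness with $(r,\ep)$-flatness to force the support to be finite, and then invoke Lemma~\ref{lemmaPeriodic2}. The only difference is in the endgame of the finiteness step: the paper covers $f(M)$ by balls $\B(x_i,r/2)$, extracts a subsequence along which each $(f^n(x_i))_n$ converges to some $y_i$, and deduces $\supp(\gamma)\subseteq\{y_1,\dots,y_N\}$; your covering-number argument (showing $\supp(\gamma)$ is covered by $C(r)$ balls of radius $\ep^m r$ for every $m$, hence $|\supp(\gamma)|\le C(r)$) reaches the same conclusion without the subsequence extraction, which is a mild streamlining but not a different route.
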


\begin{proof}
	The inclusion ``$\supset$'' follows from Lemma~\ref{lemmaPeriodic1}. Conversely, let $\lambda \in \sigma_p(w\widehat{f})\setminus\{0\}$ and fix $\gamma \in \F(M)\setminus\{0\}$ such that $w\widehat{f}(\gamma) = \lambda \gamma$. If $\gamma$ is finitely supported, then we simply apply Lemma~\ref{lemmaPeriodic2} to obtain the desired conclusion. If the support of $\gamma$ is infinite, we will prove that $w\widehat{f}(\gamma) = \lambda \gamma$ cannot hold with $\lambda \neq 0$. By assumption, $f(M)$ is totally bounded so there exists $x_1, \ldots , x_N \in M$ such that $f(M) \subset \bigcup_{i=1}^N \overline{B}(x_i,\frac{r}{2})$. 
	As $f$ is $(r,\ep)$-flat, it is readily seen that $f^n(\overline{B}(x_i,\frac{r}{2})) \subset \overline{B}(f^n(x_i),\ep^n \frac{r}{2})$ for $n \in \N$  and $i \in \{1, \ldots , N\}$. 
	Since $\overline{f(M)}$ is compact, there exists an infinite subset $\M$ of $\N$ such that the sequences $(f^{n}(x_i))_{n \in \M}$, $1 \leq i \leq N$, converge to some $y_i \in M$.
	Now the equality $(w\widehat{f})^n(\gamma) = \lambda^n \gamma$, with $\lambda \neq 0$, yields that $\supp((w\widehat{f})^n(\gamma))  = \supp(\gamma)$ for every $n$. Consequently, thanks to Lemma~\ref{preservsupplemma}, we have the following inclusions for every $n \in \M$:
	\begin{align*}
	\supp(\gamma) &= \supp((w\widehat{f})^n(\gamma))  \subset \overline{f^n(\supp(\gamma))} 
	\subset \overline{f^n\Big(\bigcup_{i=1}^N \overline{B}(x_i,\frac{r}{2})\Big)} \subset \bigcup_{i=1}^N \overline{B}(f^n(x_i),\ep^n \frac{r}{2}) \\
	&\subset \bigcup_{i=1}^N \overline{B}\Big(y_i , \ep^n \frac{r}{2} + d(y_i , f^n(x_i)) \Big).
	\end{align*}
	Since $\ep \in (0,1)$ we have that:
	$$\underset{n \in \M}{\lim\limits_{n \to \infty}}  \ep^n \frac{r}{2} + d(y_i , f^n(x_i))  = 0.$$ Therefore, we deduce that $\supp(\gamma) \subset \{y_1 , \ldots , y_{N}\}$, which is a contradiction. So if $\supp(\gamma)$ is infinite and $\widehat{f}(\gamma) = \lambda \gamma$, then $\lambda=0$.
\end{proof}

\begin{remark}
    Under the assumptions of the previous theorem, one can describe the eigenspace associated to an eigenvalue $\lambda \neq 0$ as follows. For every $n \in \N$, let $S_n = \{x \in  \mathrm{Per}_n(f) : \lambda^n = \prod_{i=0}^{n-1} w\big(f^i(x)\big) \}$. Note that $S_n$ might be empty for some but not all values of $n \in \N$. Then the eigenspace associated to $\lambda$ is the sum (for $n \in \N$) of the vector spaces generated by the elements of the form:
    $$ \sum_{i=0}^{n-1}  a_i \delta(f^i(x))  \text{ where } x \in S_n, \; a_0=1 \text{ and } a_{i+1} = (\lambda^{-1})w(x_i)a_i.$$
\end{remark}

\begin{remark} \label{remark:injectivity}
	On the one hand, it is straightforward to see that if $w(x) = 0$ or $f(x)=0$ for some $x\neq 0 \in M$, then $0 \in \sigma_p(w \widehat{f})$. On the other hand, to our knowledge, there is no characterization of injective (nor surjective) weighted Lipschitz operators in terms of properties of $w$ and $f$.
	In \cite[Corollary~2.7]{GPP22}, it is proved that, for a bounded $M$, a Lipschitz operator $\widehat{f} : \F(M) \to \F(N)$ is injective if and only if $f$ preserves supports in the following sense: \[\forall \mu \in \F(M), \quad \supp(\widehat{f}(\mu))=\overline{f(\supp(\mu))}.\]
	This is satisfied by some Lipschitz maps which are locally bi-Lipschitz \cite[Proposition~3.5]{GPP22}. In particular, this provides metric conditions which ensures that 0 is not in the point spectrum of $\widehat{f}$. 
\end{remark}

\section{Consequences when \texorpdfstring{$w \equiv 1$}{w = 1}}

\subsection{For bounded $M$}

We recall that, if $M$ is bounded, then $\widehat{f} : \F(M) \to \F(M)$ is compact if and only $f$ is uniformly locally flat and $f(M)$ is totally bounded (see \cite[Theorem~A]{ACP21}). In particular, the next corollary can be applied to these compact Lipschitz operators. 

\begin{corollary} 
	\label{cor-spectrum-compact}
	Let $M$ be a complete metric space and $f : M \to M$ be a base point preserving Lipschitz map. Let $A$ be the set of all natural numbers $n$ such that $f$ has a periodic point of order $n$. If 
 $f(M)$ is totally bounded and $f$ is uniformly locally flat, then $A$ is finite and moreover
	$$\sigma(C_f)\setminus\{0\} = \sigma(\widehat{f})\setminus\{0\} = \bigcup_{n \in A} \U_n .$$ 
\end{corollary}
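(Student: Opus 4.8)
The plan is to read the corollary as the conjunction of Theorem~\ref{thm-spectrum}, the compactness criterion for $\widehat{f}$, and the standard spectral theory of compact operators; throughout, $A$ is understood as in Theorem~\ref{thm-spectrum}, i.e. the set of $n \in \N$ for which $f$ admits a \emph{nonzero} periodic point of order $n$ (note that $n=1$ need not lie in $A$, since e.g. $1 \notin \sigma(\widehat{f})$ when $\Lip(f) < 1$). First I would observe that uniform local flatness of $f$ forces $f$ to be $(r,\ep)$-flat for some $\ep \in (0,1)$ and $r > 0$: given any $\ep \in (0,1)$, take $r = \delta$ provided by the definition of uniform local flatness applied to this $\ep$. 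Since moreover $f(M)$ is totally bounded, Theorem~\ref{thm-spectrum} applies with $w \equiv 1$ and gives
$$\sigma_p(\widehat{f}) \setminus \{0\} = \bigcup_{n \in A} \U_n.$$

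Next I would invoke that, $f$ being uniformly locally flat with $f(M)$ totally bounded, the operator $\widehat{f}$ is compact (see \cite{ACP21}; for bounded $M$ this is precisely \cite[Theorem~A]{ACP21}). Compactness yields two things. First, $\sigma(\widehat{f}) \setminus \{0\} \subset \sigma_p(\widehat{f})$, so that, combined with $\sigma_p(\widehat{f}) \subset \sigma(\widehat{f})$ and the display above,
$$\sigma(\widehat{f}) \setminus \{0\} = \sigma_p(\widehat{f}) \setminus \{0\} = \bigcup_{n \in A} \U_n.$$
Second, $0$ is the only possible accumulation point of $\sigma(\widehat{f})$; this is what I will use to show $A$ is finite. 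Finally, since $C_f = (\widehat{f})^*$ and $\sigma(T^{*}) = \sigma(T)$ in general, $\sigma(C_f) = \sigma(\widehat{f})$, which closes the chain of equalities in the statement.

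It remains to prove that $A$ is finite, and this is the step I expect to require the most care. Arguing by contradiction, suppose $A$ is infinite; then it contains infinitely many integers $n \geq 2$. For each such $n$, the primitive root of unity $\omega_n := e^{2\pi i / n}$ lies in $\U_n \subset \sigma(\widehat{f})$, is distinct from $1$, and satisfies $\omega_n \to 1$ as $n \to \infty$ along $A$. Hence $1$ is an accumulation point of $\sigma(\widehat{f})$ different from $0$, contradicting compactness of $\widehat{f}$. Therefore $A$ is finite, and $\bigcup_{n \in A} \U_n$ is then a finite union of finite sets, hence finite; in particular it is automatically closed, so no subtlety arises in identifying it with $\sigma(\widehat{f}) \setminus \{0\}$. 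The only genuine content beyond Theorem~\ref{thm-spectrum} is this last clustering argument, together with the bookkeeping of which results supply compactness of $\widehat{f}$ and the identity $\sigma(C_f) = \sigma(\widehat{f})$.
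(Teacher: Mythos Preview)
Your proof is correct and follows essentially the same route as the paper: reduce to Theorem~\ref{thm-spectrum} via uniform local flatness, invoke compactness of $\widehat{f}$ to pass from $\sigma_p$ to $\sigma$, and use that nonzero spectral values of a compact operator cannot accumulate away from $0$ to force $A$ finite. You are simply more explicit than the paper on three points: you spell out why $\widehat{f}$ is compact (the paper leaves this to the preamble before the corollary), you make the accumulation argument concrete via $\omega_n = e^{2\pi i/n} \to 1$, and you rightly flag that $A$ must be read, as in Theorem~\ref{thm-spectrum}, as counting \emph{nonzero} periodic points.
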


\begin{proof}
	Since $f$ is uniformly locally flat, for every $\ep \in (0,1)$ there exists $r>0$ such that $f$ is $(r, \ep)$-flat. Next, for every compact operator $T$, we have
	\begin{itemize}
		\item $\sigma_p(T) \setminus \{0\} = \sigma(T) \setminus \{0\}$, 
		\item if $\sigma(T)\setminus\{0\}$ is infinite then it must be a sequence converging to 0.
	\end{itemize}
By Theorem~\ref{thm-spectrum}, this implies that $A$ is finite and $\sigma(\widehat{f})\setminus\{0\} = \bigcup_{n \in A} \U_n $.
\end{proof}

Not surprisingly, if $\widehat{f}$ is not compact, then its spectrum can contain any point in $\C$.
\begin{example}
	Let $\lambda \in \C \setminus \{0\}$ and $M=\N \cup \{0\}$ equipped with the metric $d_\lambda$ defined by 
	$$d_\lambda(k,0)=\dfrac{1}{2^k|\lambda|^k}, \quad \text{and }\quad d_\lambda(k,k')=d_\lambda(k,0)+d_\lambda(k',0).$$ Let $f : M \to M$ be the map given by $f(0)=0$ and $f(n)=n-1$ if $n \in \N$.  It is clear that $f$ is Lipschitz. Let $\gamma=\displaystyle \sum_{k=1}^{+\infty} \lambda^k \delta(k).$ One has 
	$\|\gamma\|\leq \displaystyle \sum_{k=1}^{+\infty}\dfrac{1}{2^k},$
	and 
	$\widehat{f}(\gamma)=\displaystyle \sum_{k=1}^{+\infty}\lambda^k\delta(k-1)=\lambda \gamma$.
	Hence $\lambda\in \sigma_p(\widehat{f})$.
\end{example}

Let us point out that if $M$ is infinite and connected, then the spectrum of a compact Lipschitz operators $\widehat{f}$ is actually reduced to $\{0\}$. This should be compared to 
\cite[Theorem~2]{Kamo} (when $M$ is compact) and to
\cite[Theorem~1.4]{Vargas1} where it is proved that $\sigma(C_f) = \{0,1\}$ whenever $M$ is infinite and connected and $C_f$ is a compact composition operator acting on $\Lip(M)$.

\begin{corollary}\label{spec_zero}
Let $M$ be a connected bounded metric space and let $f : M \to M$ be a base point preserving Lipschitz map such that $\widehat{f}$ is compact. Then $\sigma(C_f) = \sigma(\widehat{f})=\{0\}$. 
\end{corollary}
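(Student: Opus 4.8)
The plan is to combine Corollary~\ref{cor-spectrum-compact} with connectedness of $M$ to rule out all nonzero periodic points of $f$. Since $M$ is bounded and $\widehat{f}$ is compact, \cite[Theorem~A]{ACP21} tells us that $f$ is uniformly locally flat and $f(M)$ is totally bounded, so Corollary~\ref{cor-spectrum-compact} applies and gives $\sigma(C_f)\setminus\{0\} = \sigma(\widehat{f})\setminus\{0\} = \bigcup_{n \in A}\U_n$, where $A$ is the set of periods of periodic points of $f$. Thus it suffices to show $A = \emptyset$, i.e.\ that $f$ has no periodic point other than possibly the base point (and $0$ being periodic of order $1$ would force $1 \in A$, so really we must show $f$ has no periodic point at all — but if $x_0$ is periodic of order $n\geq 1$ then so is every point of its orbit, and in particular the orbit contains $n$ distinct points; we need to exclude this entirely).

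The key step is the following: if $f$ is $(r,\ep)$-flat with $\ep \in (0,1)$, then $f$ has no periodic point of order $n$ lying in a set of diameter $\le r$ other than a fixed point, and more importantly, on a connected space $f$ has no nonconstant periodic behavior at all. Here is the mechanism. Suppose $x$ is a periodic point of order $n$, and let $O = \{x, f(x), \dots, f^{n-1}(x)\}$ be its orbit, a finite set of $n$ distinct points. Since $M$ is connected and $f$ is continuous (indeed Lipschitz), $f$ cannot separate $M$ into the clopen pieces one would need; more concretely, I will argue that $\diam f^k(M) \to 0$. Indeed, by uniform local flatness, for $\ep \in (0,1)$ choose $\delta$ with $d(u,v)\le\delta \implies d(f(u),f(v))\le \ep\, d(u,v)$. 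Since $M$ is connected (hence also $f(M)$, and all iterates $f^k(M)$ are connected) and has diameter $D := \diam M < \infty$, a chaining argument along a connected set shows that any two points of $f(M)$ can be joined within $f(M)$ by points at pairwise distance $\le \delta$ — more carefully, one uses that on a connected metric space, for any $u,v$ and any $\eta>0$ there is a finite $\eta$-chain from $u$ to $v$ — and applying the contraction estimate along such a chain gives $d(f(u),f(v)) \le \ep\,(\text{length of chain})$, which after iterating yields $\diam f^k(M) \le \ep^k \cdot (\text{something bounded})$, hence $\to 0$. But the orbit $O$ satisfies $f^n(O) = O$, so $\diam O \le \diam f^{kn}(M) \to 0$, forcing $\diam O = 0$, i.e.\ $|O| = 1$ and $n = 1$ with $f(x) = x$. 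Then the only candidate is $x = 0$ (the base point), and one checks $0$ being a genuine periodic point of order $1$ would put $1 \in A$ and give $\sigma(\widehat f)\setminus\{0\} = \U_1 = \{1\}$ — so one must additionally observe that even a fixed point of $f$ does not produce an eigenvalue here: rather, re-examining, since $\widehat{f}$ is compact and $\F(M)$ is infinite-dimensional (as $M$ is infinite, being connected with more than one point), $1 \in \sigma_p(\widehat f)$ would require a finitely supported eigenvector by Theorem~\ref{thm-spectrum}, i.e.\ an actual periodic orbit, which we just excluded; if $M$ is a single point the statement is trivial. Hence $A = \emptyset$ and $\sigma(\widehat f)\setminus\{0\} = \emptyset$.

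It remains to note that $0 \in \sigma(\widehat f)$ always: if $M$ is infinite then $\F(M)$ is infinite-dimensional and a compact operator on an infinite-dimensional Banach space is never invertible, so $0 \in \sigma(\widehat f)$; if $M$ is a single point then $\F(M) = \{0\}$ and the statement $\sigma(\widehat f) = \{0\}$ holds by the usual convention (or is vacuous). Since a connected metric space with at least two points is infinite, this covers all relevant cases. Combining, $\sigma(\widehat f) = \{0\}$, and $\sigma(C_f) = \sigma(\widehat f) = \{0\}$ by the identification $\sigma(C_f) = \sigma((\widehat f)^*) = \sigma(\widehat f)$ recalled in the background section.

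The main obstacle I anticipate is the chaining argument showing $\diam f^k(M) \to 0$ on a connected space: one must be slightly careful that connectedness alone (not path-connectedness) still gives finite $\delta$-chains between any two points — this is a standard fact about connected metric spaces — and that the contraction applies along the chain to control the distance between the \emph{endpoints}, not just consecutive points, which works because $d(f(u),f(v)) \le \sum d(f(p_{i}),f(p_{i+1})) \le \ep \sum d(p_i,p_{i+1})$ and one needs the chain to have controlled total length, which is where boundedness of $M$ and a careful choice of chain (or an induction on the number of $\delta$-steps needed to cross $M$) enters. Alternatively, and perhaps more cleanly, one can invoke that uniform local flatness plus connectedness forces $f$ to be constant on $M$ when $M$ is bounded — if such a statement is available it short-circuits everything — but absent that I would run the chaining estimate directly.
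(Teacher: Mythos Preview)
Your strategy—rule out nonzero periodic points via a chaining/contraction argument and then invoke Corollary~\ref{cor-spectrum-compact}—is different from the paper's. The paper instead shows that $K := \bigcap_{n\ge 1}\overline{f^n(M)}$ is finite (by the covering argument from Theorem~\ref{thm-spectrum}) and connected (as a nested intersection of connected compacta), hence $K=\{0\}$; since every periodic orbit lies in $K$, there are none. Both routes ultimately exploit connectedness plus uniform local flatness, but the paper's uses a clean topological fact where you attempt a direct metric estimate.

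Your chaining argument has a genuine gap, precisely where you flag it. From a $\delta$-chain $u=p_0,\dots,p_N=v$ you correctly obtain $d(f(u),f(v))\le \ep\sum_i d(p_i,p_{i+1})$, but in a general connected bounded metric space the total chain length $\sum_i d(p_i,p_{i+1})$ is \emph{not} uniformly bounded: boundedness of $M$ controls $d(u,v)$, not the intrinsic length of chains, and ``induction on the number of $\delta$-steps'' does not help since that number can be arbitrarily large. So the claimed inequality $\diam f^k(M)\le \ep^k\cdot C$ is unjustified. The good news is that you do not need this uniform statement. To exclude a nonzero periodic point $x$ of period $n$, fix \emph{one} $\delta$-chain from $x$ to $0$, say of length $N$; iterating $(\delta,\ep)$-flatness along that fixed chain gives $d(f^{jn}(x),0)=d(f^{jn}(x),f^{jn}(0))\le N\ep^{jn}\delta\to 0$, while periodicity forces $d(f^{jn}(x),0)=d(x,0)>0$—a contradiction. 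With this pointwise version your approach works. (The detour about whether $0$ counts as a periodic point is just a notational wrinkle: in Theorem~\ref{thm-spectrum}, and hence implicitly in Corollary~\ref{cor-spectrum-compact}, the set $A$ records only \emph{nonzero} periodic points, so once those are excluded $A=\emptyset$.)
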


\begin{proof}
Following \cite{Kamo2}, we consider the set $K := \cap_{n\geq 1} \overline{f^n(M)}$. Note that since $f(0)=0$, we have $0\in K$. Since $f(M)$ is totally bounded, the subsets $\overline{f^n(M)} \subset M, n\geq 1$, are compact. Then, by a similar reasoning than the one in the proof of Theorem~\ref{thm-spectrum}, we get that $K$ is finite. But notice that $(\overline{f^n(M)})_{n\geq 1}$ is a decreasing sequence of connected compact sets, so their intersection is connected and it follows that $K = \{0\}$. Finally, assume that there exists $\lambda \in \sigma_p(\widehat{f}) \setminus \{0\}$. Then, according to the previous theorem, $\lambda \in \U_n$ for some $n \in A$. Next, if $\gamma$ is an eigenvector associated to $\lambda$, then $\gamma$ is finitely supported by  Theorem~\ref{thm-spectrum}. Moreover, the proof of Lemma~\ref{lemmaPeriodic2} shows that $\supp(\gamma)$ can be written as $\bigsqcup_{i=1}^k \{f^j(y_i) : \, 0 \leq j \leq n_i-1 \}$ where each $y_i$ is a periodic point with period $n_i$.
Consequently,  
$\supp\left(\gamma \right) \subset K$ and so $\gamma = 0$, a contradiction. This shows that $\sigma(\widehat{f}) = \{0\}$.
\end{proof}

Notice that the discrepancy between $\{0,1\}$ and $\{0\}$ is explained by the fact that $\Lip(M)$ contains constant functions whereas the only constant function in $\Lip_0(M)$ is $0$. For instance, in \cite[Theorem 2]{Kamo}, the authors show that for the composition operator $C_f : g \in \Lip(M) \mapsto g \circ f \in \Lip(M)$ on a compact connected metric space $M$, the eigenspace associated to the eigenvalue $1$ consists only in the constant Lipschitz functions. Therefore $1$ is not an eigenvalue of $C_f$ when restricted to $\Lip_0(M)$, and hence is not an eigenvalue of $\widehat{f}$.\\

Finally, we identify different conditions under which the point spectrum is reduced to $\{0\}$ (but without assuming that $\widehat{f}$ is compact). Recall that the spectral radius of a bounded operator $T: X \to X$ is defined by $r(T) : = \sup_{\lambda \in \sigma(T)} |\lambda| $. Now Gelfand's formula, also known as the spectral radius formula, gives that $r(T) = \lim\limits_{n \to +\infty} \|T^n\|^{\frac 1 n} = \inf_{n \in \N} \|T^n\|^{\frac 1 n}$.

\begin{proposition}
    Let $M$ be a bounded complete metric space and let $f : M \to M$ be a base point preserving Lipschitz map such that $r(\widehat{f})<1$ (e.g. $\Lip(f) < 1$). Then $\sigma_p(\widehat{f}) \subset \{0\}$.
\end{proposition}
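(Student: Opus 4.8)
The plan is to prove the contrapositive: if $\lambda \in \sigma_p(\widehat{f})$ then $\lambda = 0$. So suppose $\lambda \neq 0$ is an eigenvalue with eigenvector $\gamma \in \F(M) \setminus \{0\}$, i.e. $\widehat{f}(\gamma) = \lambda \gamma$. The idea is that the spectral radius controls the growth of iterates, and $\lambda$ being an eigenvalue forces a lower bound on that growth that contradicts $r(\widehat{f}) < 1$. Concretely, iterating gives $\widehat{f}^n(\gamma) = \lambda^n \gamma$ for every $n \in \N$, and since $\widehat{f}^n = \widehat{f^n}$ (the linearization of the $n$-th iterate $f^n$), we get $\|\lambda\|^n \|\gamma\| = \|\widehat{f}^n(\gamma)\| \leq \|\widehat{f}^n\| \, \|\gamma\|$, hence $|\lambda| \leq \|\widehat{f}^n\|^{1/n}$ for all $n$. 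Taking the infimum over $n$ and invoking Gelfand's formula yields $|\lambda| \leq r(\widehat{f}) < 1$.

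That alone only shows $|\lambda| < 1$, not $\lambda = 0$, so the heart of the argument is to rule out every $\lambda$ with $0 < |\lambda| < 1$ as well. Here I would use that $M$ is \emph{bounded}: then $\delta(M)$ is a bounded subset of $\F(M)$, say $\|\delta(x)\| = d(x,0) \leq \diam(M)$ for all $x$, and more importantly the weak-$^*$ topology considerations for supports come into play. The cleaner route: apply the support-preservation Lemma~\ref{preservsupplemma} (with $w \equiv 1$, so $\mathrm{coz}(w) = M$) to get, for all $n$, that $\supp(\lambda^n\gamma) = \supp(\gamma) \subset \overline{f^n(\supp(\gamma))}$. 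Combined with the fact that $\widehat{f}^n \to 0$ in norm when $r(\widehat{f}) < 1$ would force $\widehat{f}^n(\gamma) \to 0$, contradicting $\|\widehat{f}^n(\gamma)\| = |\lambda|^n \|\gamma\|$ only if $|\lambda| \geq 1$ — so actually the norm argument already handles $|\lambda| \ge 1$, and for $|\lambda| < 1$ we argue differently. Wait — rereading, the statement only claims $\sigma_p(\widehat{f}) \subset \{0\}$, so in fact we must show that \emph{no} nonzero $\lambda$, however small, is an eigenvalue. For this, note $\widehat{f}^n(\gamma) = \lambda^n \gamma \to 0$ in norm (since $|\lambda| < 1$), which is consistent, so the norm estimate does not immediately close the case.

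The actual obstacle, then, is ruling out small $|\lambda|$, and I expect the key is a \textbf{Neumann-series / invertibility} observation rather than a metric one. Since $r(\widehat{f}) < 1$, the operator $\mathrm{Id} - \widehat{f}$ is invertible; but that concerns the eigenvalue $1$, not small $\lambda$. The right statement: for any $\lambda$ with $|\lambda| \leq r(\widehat{f}) < 1$... no, $\lambda$ could still be in the spectrum. Let me reconsider: the correct conclusion is that $\sigma(\widehat{f}) \subset \overline{B}(0, r(\widehat{f}))$ trivially, so $\sigma_p(\widehat{f}) \subset \overline{B}(0, r(\widehat{f}))$, which does not give $\{0\}$. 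Hence the proposition as stated must use something finer. I would therefore suspect the intended argument combines $r(\widehat{f}) < 1$ with boundedness of $M$ via the following: an eigenvector $\gamma$ with $\widehat{f}(\gamma) = \lambda \gamma$, $\lambda \neq 0$, has $\gamma \in \bigcap_n \overline{\widehat{f}^n(\F(M))}$; but $\|\widehat{f}^n\| \to 0$, so this intersection is $\{0\}$ (any $\gamma$ in it satisfies $\|\gamma\| = \|\widehat{f}^n(\gamma_n)\| \leq \|\widehat{f}^n\|\,\|\gamma_n\|$ — which needs a uniform bound on $\|\gamma_n\|$, and here $\gamma_n = \lambda^{-n}\gamma$ works since $\widehat{f}^n(\lambda^{-n}\gamma) = \gamma$, giving $\|\gamma\| \leq \|\widehat{f}^n\| \, |\lambda|^{-n}\|\gamma\|$, i.e. $|\lambda|^n \leq \|\widehat{f}^n\|$, i.e. $|\lambda| \leq r(\widehat{f}) < 1$). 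I am going in circles, which tells me the honest proof is exactly the one-line Gelfand estimate and that the proposition's conclusion "$\subset \{0\}$" is secretly "$\subset \overline{B}(0,r(\widehat{f}))$" sharpened by a separate fact I am missing from later in the paper (perhaps that nonzero eigenvalues of $\widehat{f}$ always have modulus $1$ via periodic-point structure when they correspond to finitely supported eigenvectors, plus boundedness forcing finite support). So my final plan: first establish $|\lambda| \leq r(\widehat{f}) < 1$ for any $\lambda \in \sigma_p(\widehat{f})$ by the Gelfand iteration argument above; then invoke the (earlier, or easily-adapted) fact that under boundedness every eigenvector for a \emph{nonzero} eigenvalue is finitely supported, whence Lemma~\ref{lemmaPeriodic2} forces $\lambda \in \U_n$ for some $n$, so $|\lambda| = 1$, contradicting $|\lambda| < 1$; therefore no nonzero eigenvalue exists and $\sigma_p(\widehat{f}) \subset \{0\}$. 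The main obstacle is justifying the finite-support claim for bounded $M$ without the flatness hypothesis of Theorem~\ref{thm-spectrum} — I would handle it by noting $r(\widehat{f}) < 1$ already gives $\|\widehat{f}^n\| \to 0$, and then running the totally-bounded-image argument of Theorem~\ref{thm-spectrum}'s proof with the shrinking factor $\|\widehat{f}^n\|^{1/n} \to r(\widehat{f}) < 1$ playing the role of $\ep^n$, which forces $\supp(\gamma)$ into a finite set.
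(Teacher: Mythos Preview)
Your final plan has a genuine gap: adapting the proof of Theorem~\ref{thm-spectrum} requires $f(M)$ to be totally bounded (to cover it by finitely many balls and then extract convergent subsequences of the iterated centers), and that hypothesis is simply not available here --- $M$ is only assumed bounded. Without it, the ``shrinking factor'' $\Lip(f^n)^{1/n}\to r(\widehat{f})<1$ gives you nothing to shrink \emph{onto}, so you cannot conclude finite support this way, and the chain ``finite support $\Rightarrow$ Lemma~\ref{lemmaPeriodic2} $\Rightarrow$ $|\lambda|=1$ $\Rightarrow$ contradiction'' never gets off the ground.

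The fix is both simpler and already present in your draft, where you wrote $\supp(\gamma)\subset\overline{f^n(\supp(\gamma))}$ and then abandoned it. The point you are missing is that $f$ is \emph{base point preserving}: $f(0)=0$. Pick $N$ with $\Lip(f^N)<1$ (this exists since $r(\widehat{f})=\lim_n\Lip(f^n)^{1/n}<1$). Then for every $x\in M$ one has $d(f^{Nk}(x),0)\le \Lip(f^N)^k\,d(x,0)\le \Lip(f^N)^k\diam(M)$, so $f^{Nk}(M)\subset\B(0,\Lip(f^N)^k\diam(M))$. Combined with $\supp(\gamma)=\supp(\lambda^{Nk}\gamma)=\supp(\widehat{f^{Nk}}(\gamma))\subset\overline{f^{Nk}(M)}$ and intersecting over $k$, you get $\supp(\gamma)\subset\{0\}$, hence $\gamma=0$, a contradiction. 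No total boundedness, no detour through roots of unity --- the support collapses directly to the fixed point $0$. This is exactly the paper's argument.
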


\begin{proof}
    Suppose that $\widehat{f}(\gamma) = \lambda \gamma$ with $\lambda \neq 0$ and $\gamma \neq 0$. Since $r(\widehat{f})<1$, there exists $N \in \N$ such that $\Lip(f^N)<1$. Moreover $\widehat{f^k}(\gamma) = \lambda^k \gamma$ for every integer $k$, therefore: 
    $$\forall k \in \N, \quad \supp(\gamma) = \supp(\widehat{f^{Nk}})(\gamma)) \subset \overline{f^{Nk}(M)} \subset \B(0,\Lip(f^N)^k \diam(M)). $$ 
    We deduce that $\supp(\gamma) \subset \bigcap_{k \in \N} \B\big(0,\Lip(f^N)^{k}\diam(M)\big) = \{0\}$. This implies that $\gamma = 0$, a contradiction.
\end{proof}

\begin{example}\label{Exshift} The previous result does not hold true for unbounded metric spaces. Indeed, let $M = \mathbb{N} \cup \{ 0 \}$ equipped with the following metric: if $n \neq m\in \mathbb{N}$, $d(0,n) = 2^n$ and $d(n,m) = 2^n+2^m$. It follows from \cite[Proposition 1.6]{ACP20} 
that $\F(M)$ is linearly isometric to $\ell_1(\N)$ via $e_n \mapsto 2^{-n}\delta(n)$,  where $(e_n)_n$ is the usual Schauder basis of $\ell_1(\N)$. Let $f : M \to M$ be given by $f(0)=0$ and $f(n) = n-1$ for $n\in \mathbb{N}$. Then, $\widehat{f}$ is conjugate to the operator $T : \ell_1 \to \ell_1$ given by
    $$
    \forall n \in \mathbb{N}, \ Te_n = \dfrac{1}{2} e_{n-1},
    $$
    that is, $T = \frac{1}{2}S$ where $S$ is the backward shift operator. It is well known that $\sigma_p(S) = D(0,1)$ and $\sigma(S) = \overline{D}(0,1)$. Thus $\sigma_p(\widehat{f}) = \sigma_p(T) = D(0,1/2)$ and $\sigma(\widehat{f}) = \sigma(T) = \overline{D}(0,1/2)$. In particular, if the metric space is unbounded and $\Lip(f) < 1$, the point spectrum can contain nonzero elements.
\end{example}

\subsection{For unbounded $M$.}
In the case when the metric space $M$ is unbounded, we do not know whether Corollary~\ref{cor-spectrum-compact} holds true. However, partial answers are given below.

\begin{proposition}\label{spectrumunboundedcase}
Let $M$ be a complete unbounded metric space and $f : M \to M$ be a base point preserving Lipschitz map.
Assume that $f$ satisfies the following properties:
\begin{itemize}
\item[$(i)$] $f$ is uniformly locally flat;
\item[$(ii)$] $\underset{d(x,y) \to +\infty}{\lim} \ \dfrac{d(f(x),f(y))}{d(x,y)}=0$;
\item[$(iii)$] The spectral radius of $\widehat{f}$ satisfies $r(\widehat{f})<1$.
\end{itemize}
Then $\sigma(\widehat{f}) = \{0\}$.
\end{proposition}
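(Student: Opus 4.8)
The plan is to combine Theorem~\ref{thm-spectrum} (or rather its mechanism) with the extra hypotheses $(ii)$ and $(iii)$ to rule out \emph{all} nonzero spectral values, not merely to describe the point spectrum. Since $r(\widehat{f})<1$, every $\lambda\in\sigma(\widehat{f})$ satisfies $|\lambda|<1$; in particular $\widehat{f}$ has no nonzero periodic point of order $n$ with $\lambda^n=1$ available, so the set $A$ from Theorem~\ref{thm-spectrum} will ultimately be shown to be empty. The first step is to argue that $\sigma(\widehat{f})\setminus\{0\}=\sigma_p(\widehat{f})\setminus\{0\}$ even though $\widehat{f}$ need not be compact. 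This is where hypotheses $(i)$ and $(ii)$ come in: they should force $f(M)$ to be totally bounded, or at least allow one to invoke the compactness characterisation so that $\widehat{f}$ is in fact compact. Indeed $(i)$ plus $(ii)$ is precisely the kind of ``uniformly locally flat $+$ flat at infinity'' condition that appears in the compactness results of \cite{ACP21}; once $\widehat{f}$ is compact, $\sigma(\widehat{f})\setminus\{0\}\subset\sigma_p(\widehat{f})$ automatically.

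With compactness in hand, the second step is to apply Theorem~\ref{thm-spectrum}: pick $\ep\in(0,1)$ and, by $(i)$, an $r>0$ with $f$ being $(r,\ep)$-flat; condition $(ii)$ supplies the total boundedness of $f(M)$ (a flat-at-infinity map with totally bounded behaviour near any basepoint-bounded region has relatively compact image — this is the point to check carefully). Theorem~\ref{thm-spectrum} then gives $\sigma_p(\widehat{f})\setminus\{0\}=\bigcup_{n\in A}\U_n$. The third step uses $(iii)$: if $A\neq\varnothing$ then some $n$-th root of unity lies in $\sigma_p(\widehat{f})\subset\sigma(\widehat{f})$, forcing $r(\widehat{f})\geq 1$, contradicting $(iii)$. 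Hence $A=\varnothing$, so $\sigma_p(\widehat{f})\subset\{0\}$, and combined with $\sigma(\widehat{f})\setminus\{0\}\subset\sigma_p(\widehat{f})$ we conclude $\sigma(\widehat{f})\setminus\{0\}=\varnothing$. Since $\F(M)$ is infinite-dimensional, $\sigma(\widehat{f})\neq\varnothing$, so $0\in\sigma(\widehat{f})$ and therefore $\sigma(\widehat{f})=\{0\}$.

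The main obstacle I anticipate is the first step: verifying that $(i)$ and $(ii)$ together actually yield compactness (or at least total boundedness of $f(M)$) in the \emph{unbounded} setting, since the clean compactness criterion of \cite[Theorem~A]{ACP21} was stated for bounded $M$. One likely needs the precise unbounded-case characterisation of compact Lipschitz operators (flat at infinity in the sense defined in the notations, plus uniform local flatness, plus total boundedness of the image), and to check that $(ii)$ — the vanishing of $d(f(x),f(y))/d(x,y)$ as $d(x,y)\to\infty$ — is exactly the ``flat at infinity'' hypothesis needed. If for some reason compactness is not cleanly available, the fallback is to run the support-shrinking argument of Theorem~\ref{thm-spectrum} directly on an arbitrary $\gamma$ with $\widehat{f}(\gamma)=\lambda\gamma$, $\lambda\neq 0$: iterate $\widehat{f}$, use $(i)$ to contract locally and $(ii)$ to control points that drift to infinity, and deduce $\supp(\gamma)$ is a finite set of periodic points; then $(iii)$ kills the eigenvalue as above, and a separate argument (Gelfand's formula: $r(\widehat{f})<1$ means $\|\widehat{f}^n\|\to 0$, so $\widehat{f}^n\to 0$ in norm, whence $\widehat{f}-\lambda\mathrm{Id}$ is invertible for $\lambda\neq 0$ via Neumann series) shows directly that $\sigma(\widehat{f})\setminus\{0\}=\varnothing$ without needing the full spectrum-equals-point-spectrum statement. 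In fact this last remark — that $r(\widehat{f})<1$ alone gives $\sigma(\widehat{f})\subset\overline{D}(0,r(\widehat{f}))$ and one only needs to exclude the annulus $0<|\lambda|\le r(\widehat{f})$ — may streamline the whole argument, with $(i)$ and $(ii)$ doing the work of excluding that annulus.
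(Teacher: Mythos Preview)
Your plan has a genuine gap at exactly the point you flagged. Hypotheses $(i)$ and $(ii)$ do \emph{not} imply that $\widehat{f}$ is compact, nor that $f(M)$ is totally bounded. Condition $(ii)$ is strictly weaker than ``flat at infinity'' (the paper notes this explicitly just before Theorem~\ref{thmFlatInf}): in $(ii)$ only $d(x,y)\to\infty$, whereas flatness at infinity requires both $d(x,0)$ and $d(y,0)$ to be large. For a concrete obstruction, take $M=\Z$ with the usual metric: every self-map is uniformly locally flat (the condition is vacuous for $\delta<1$), and a map like $f(n)=\lfloor\sqrt{|n|}\rfloor$ satisfies $(ii)$ while having unbounded image. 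So Theorem~\ref{thm-spectrum} cannot be invoked, and your fallback of running its support-shrinking argument directly also fails, since that argument needs a finite cover of $f(M)$. Your second fallback (Neumann series from $r(\widehat{f})<1$) only excludes $|\lambda|>r(\widehat{f})$; it says nothing about the annulus $0<|\lambda|\le r(\widehat{f})$, and you give no mechanism for emptying that annulus.

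The paper's proof is completely different in spirit: it never touches eigenvectors, supports, or periodic points. It shows directly that $r(\widehat{f})=0$ via Gelfand's formula $r(\widehat{f})=\lim_n \Lip(f^n)^{1/n}$. Reducing first (via $(iii)$) to the case $\alpha:=\Lip(f)<1$, one fixes $\varepsilon\in(0,1)$ and uses $(i)$ and $(ii)$ to find $0<r<R$ such that the ratio $d(f(u),f(v))/d(u,v)$ is below $\varepsilon$ whenever $d(u,v)\notin[r,R]$. The key counting step: along any orbit-pair $(f^k(x),f^k(y))_{k\ge 0}$ the distance contracts by $\alpha$ at each step, so there are at most $N$ indices $k$ (with $N$ depending only on $r,R,\alpha$, not on $n$) for which $d(f^k(x),f^k(y))\in[r,R]$. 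Telescoping the product of ratios then gives $\Lip(f^n)\le \alpha^N\varepsilon^{n-N}+\varepsilon^n$, whence $\limsup_n\Lip(f^n)^{1/n}\le\varepsilon$. This is the missing idea: hypotheses $(i)$ and $(ii)$ are used not to force compactness but to squeeze the incremental Lipschitz ratios from both sides, with $(iii)$ supplying the contraction that bounds the number of ``bad'' steps.
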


\begin{proof}
Assume first that $\alpha := \Lip(f) < 1$. Proving that $\sigma(\widehat{f}) = \{0\}$ is equivalent to proving that $r(\widehat{f})=0$. By the spectral radius formula, we have
$$
r(\widehat{f}) = \lim_n \| \widehat{f}^n \|^{\frac 1 n} = \lim_n \Lip(f^n)^{\frac 1 n}.
$$
Fix $\varepsilon \in (0,1)$ and let us prove that $r(\widehat{f}) \leq \varepsilon$. We can assume that $f^n \neq 0$ for every $n$, otherwise the result is trivial. For every $n\in \mathbb{N}$, there exist $x_n \neq y_n \in M$ such that
\begin{equation*}
\Lip(f^n) \leq \dfrac{d(f^n(x_n), f^n(y_n))}{d(x_n, y_n)} + \varepsilon^n.
\end{equation*}
By the property $(ii)$, there exists $R>0$ such that
$$
\forall (u,v)\in M^2, \ d(u,v)>R \implies \dfrac{d(f(u), f(v))}{d(u,v)}<\varepsilon.
$$
Next, since $f$ is uniformly locally flat, there exists $r \in (0 , R)$ such that 
$$
\forall (u,v)\in M^2, \ d(u,v)<r \implies \dfrac{d(f(u), f(v))}{d(u,v)}<\varepsilon.
$$
We now need the following three observations. Fix $n\in \N$ and let, for $k \in \N$, $$F_k = \dfrac{d(f^{k+1}(x_n), f^{k+1}(y_n))}{d(f^k(x_n), f^k(y_n))}.$$ 
\begin{itemize}[leftmargin=*]
        \item If, for some $k\in \mathbb{N}$,
        $
        d(f^k(x_n),f^k(y_n)) > R,
        $
        then for every $k' \in \{ 0, \ldots, k \}$:
        $$
        d(f^{k'}(x_n),f^{k'}(y_n)) \geq  \dfrac{d(f^k(x_n),f^k(y_n))}{\alpha^{k-k'}} > R \quad  \text{ and } \quad F_{k'} < \ep.
        $$
        \item If for some $k\in \mathbb{N}$,
        $
        d(f^k(x_n),f^k(y_n)) < r,
        $
        then for every $k' \geq k$,
        $$
        d(f^{k'}(x_n),f^{k'}(y_n)) \leq \alpha^{k'-k} d(f^k(x_n),f^k(y_n)) < r \;  \text{ and } \; F_{k'}< \ep.
        $$
        \item Let us fix $N \in \N$ such that $\alpha^N < \frac{r}{R}$. Notice that $N$ does not depend on $n$. Then, there are at most $N$ integer $k$'s such that $d(f^k(x_n), f^k(y_n)) \in [r,R]$. Indeed, if $r\leq d(f^k(x_n), f^k(y_n)) \leq R$, then for every $k'\geq N$:
        $$
        d(f^{k+k'}(x_n), f^{k+k'}(y_n)) \leq \alpha^{k'} d(f^k(x_n), f^k(y_n)) < r.
        $$
        \end{itemize}
\medskip

We now consider $n>N$ and  $I = \{k \in \{0,\ldots ,n-1\} \,: \, d(f^k(x_n), f^k(y_n)) \in [r,R] \}$.
According to the latter observations, $|I|\leq N$ and we have
    \begin{align*}
      \Lip(f^n)
         \leq \prod_{k=0}^{n-1} F_k + \varepsilon^n
         \leq \Big(\underset{k \in I}{\prod_{k=0}^{n-1}} F_k \Big) \Big(\underset{k \not\in I}{\prod_{k=0}^{n-1}} F_k \Big) + \varepsilon^n 
         \leq \alpha^{N} \times \varepsilon^{n-N}+ \varepsilon^n
         \leq 2\varepsilon^{n-N}.
    \end{align*}
    Hence
    $$
    \Lip(f^n)^{1/n} \leq 2^{1/n} \varepsilon^{1-N/n}.
    $$
This yields
    $
    \limsup_n \Lip(f^n)^{1/n} \leq \varepsilon,
    $
and gives the result when $\Lip(f) < 1$.
\medskip

In the general case, the spectral radius formula $r(\widehat{f}) = \lim_n \Lip(f^n)^{1/n}<1$ gives the existence of $n_0\in \N$ such that $\Lip(f^{n_0})^{1/n_0} < 1$ and hence $\Lip(f^{n_0}) < 1$. Next, the inequality
$$
\forall (u,v)\in M^2, \ \dfrac{d(f^{n_0}(u), f^{n_0}(v))}{d(u,v)} \leq \Lip(f)^{n_0-1} \dfrac{d(f(u), f(v))}{d(u,v)}
$$
shows that $f^{n_0}$ satisfies the properties $(i)$ and $(ii)$. Hence, by the first part of the proof, $\sigma(\widehat{f^{n_0}}) = \{ 0 \}$. But this readily implies that $\sigma(\widehat{f}) = \{ 0 \}$.
\end{proof}

\begin{remark}
In particular, if $\widehat{f}$ is compact with $r(\widehat{f})<1$, then $\sigma(\widehat{f}) = \{ 0 \}$. Indeed, in that case, by \cite[Theorem~A]{ACP21} and \cite[Remark~2.7]{ACP21}, $f$ satisfies properties $(i)$ and $(ii)$ in Proposition \ref{spectrumunboundedcase}.
\end{remark}

Our next aim is to obtain a structural result identical to Corollary~\ref{cor-spectrum-compact}, for a general $M$, but under a stronger assumption on $f$. Recall that $f : M \to M$ is said to be \textit{radially flat} whenever $$\ \  \lim\limits_{d(x,0) \to \infty} \dfrac{d(f(x),0)}{d(x,0)} =0. $$

\begin{lemma} \label{lemma:Rad}
If $f : M \to M$ is radially flat then there exists $R_0>0$ such that for every $R \geq R_0$, $f(\B(0,R)) \subset \B(0,\frac{R}{2})$.  
\end{lemma}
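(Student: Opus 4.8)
The plan is a routine $\ep$--$\delta$ argument in which the ball $\B(0,R)$ is split into an ``outer'' part, where radial flatness does the work, and a fixed ``inner'' part, where the Lipschitz property of $f$ takes over.

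First I would apply the definition of radial flatness with $\ep = \tfrac{1}{2}$: there is some $S > 0$ such that $d(f(x),0) \le \tfrac{1}{2} d(x,0)$ whenever $d(x,0) > S$. This already handles every $x$ with $S < d(x,0) \le R$, since then $d(f(x),0) \le \tfrac{1}{2} d(x,0) \le \tfrac{R}{2}$.

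It remains to control $f$ on the bounded region $\B(0,S)$, where the hypothesis gives no information. Here I would use only that $f$ is a base point preserving Lipschitz map, so that
\[
d(f(x),0) = d(f(x),f(0)) \le \Lip(f)\, d(x,0) \le \Lip(f)\, S =: C \qquad (x \in \B(0,S)).
\]
If $\Lip(f) = 0$ then $f \equiv 0$ and the conclusion is trivial, so assume $C > 0$ and set $R_0 := 2C$. Then for any $R \ge R_0$ and any $x \in \B(0,R)$ exactly one of two cases occurs: if $d(x,0) \le S$ then $d(f(x),0) \le C = \tfrac{R_0}{2} \le \tfrac{R}{2}$; if $d(x,0) > S$ then the first step gives $d(f(x),0) \le \tfrac{R}{2}$. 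In either case $f(x) \in \B(0,\tfrac{R}{2})$, which is exactly the assertion.

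There is no real obstacle here; the only point worth keeping in mind is that radial flatness only controls $f$ away from the base point, so the Lipschitz estimate on the fixed ball $\B(0,S)$ is genuinely needed, and it is this estimate that pins down the threshold $R_0$.
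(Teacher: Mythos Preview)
Your proof is correct and follows essentially the same approach as the paper's: split $\B(0,R)$ at the radial-flatness threshold, use radial flatness on the outer shell and the Lipschitz bound $d(f(x),0)\le \Lip(f)\,S$ on the inner ball, and take $R_0$ to be twice that inner bound. The paper sets $R_0 = 2\max(\Lip(f)\,r_0,\,r_0)$ and routes through the auxiliary observation $f(\B(0,r_1))\subset\B(0,r_1)$, but the argument is the same in substance.
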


\begin{proof}
By the very definition of radially flat, there exists $r_0 >0$ such that one has for every $x \in M$:
$$d(x,0) \geq r_0 \implies d(f(x) , 0) \leq \frac{1}{2} d(x,0).$$
Now let $r_1 = \max(r_0 \Lip(f) , r_0)$. Notice that $f(\B(0,r_1)) \subset \B(0,r_1)$. Indeed,  if $d(x,0) \leq r_0$ then $d(f(x),0) \leq \Lip(f)d(x,0) \leq r_1$. At the same time, if $r_0 < d(x,0) \leq r_1$ then $d(f(x),0) \leq \frac{1}{2} d(x,0) < r_1$. Now let $R_0 = 2r_1$ and $R \geq R_0$. In order to deduce that $f(\B(0,R)) \subset \B(0,\frac{R}{2})$, we distinguish two cases again:
\begin{itemize}
	\item If $r_1 < d(x,0) \leq R$, then $d(x,0) >  r_0$ so that $d(f(x),0) \leq \frac{1}{2} d(x,0) \leq \frac{R}{2}$;
	\item If  $d(x,0) \leq r_1$, then $d(f(x),0) \leq  r_1 =\frac{R_0}{2} \leq \frac{R}{2}$. 
\end{itemize}
\end{proof}

Now recall that $f$ is \textit{flat at infinity} whenever $$ \underset{d(y,0) \to \infty}{\lim\limits_{d(x,0) \to \infty}}  \dfrac{d(f(x),f(y))}{d(x,y)} =0.$$
It is rather easy to see that flateness at infinity is stronger that Property $(ii)$ from Proposition~\ref{spectrumunboundedcase}, which in turn is stronger than radial flateness. For more details we refer the reader to \cite[Remark~2.7]{ACP21}.

\begin{theorem} \label{thmFlatInf}
	Suppose that $f$ is flat at infinity. If $\widehat{f}(\gamma) = \lambda \gamma$ for some $\lambda \in \C \setminus \{0\}$ and $\gamma \in \F(M) \setminus \{0\}$, then $\supp(\gamma)$ is bounded. 
\end{theorem}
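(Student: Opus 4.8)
The idea is to run the same iteration argument as in Theorem~\ref{thm-spectrum}, but using radial control instead of total boundedness of $f(M)$. Since $f$ is flat at infinity, it is in particular radially flat, so Lemma~\ref{lemma:Rad} applies: there exists $R_0>0$ such that $f(\overline{B}(0,R))\subset \overline{B}(0,R/2)$ for every $R\geq R_0$, and iterating, $f^n(\overline{B}(0,R))\subset \overline{B}(0,R/2^n)$. Assume toward a contradiction that $\gamma\in \F(M)\setminus\{0\}$ satisfies $\widehat{f}(\gamma)=\lambda\gamma$ with $\lambda\neq 0$ and $\supp(\gamma)$ unbounded. As in the proof of Theorem~\ref{thm-spectrum}, the relation $\widehat{f}^{\,n}(\gamma)=\lambda^n\gamma$ with $\lambda\neq 0$ forces $\supp(\gamma)=\supp(\widehat{f}^{\,n}(\gamma))$ for all $n$, and Lemma~\ref{preservsupplemma} (with $w\equiv 1$) gives $\supp(\gamma)\subset \overline{f^n(\supp(\gamma))}$ for every $n$.

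**Key step: splitting the support into a bounded part and a far part.** The obstruction compared to Theorem~\ref{thm-spectrum} is that $\supp(\gamma)$ need not be totally bounded, so $f^n(\supp(\gamma))$ need not shrink to a finite set. To handle this, fix $R\geq R_0$ and write $\supp(\gamma)=K_R\cup E_R$ where $K_R=\supp(\gamma)\cap \overline{B}(0,R)$ and $E_R=\supp(\gamma)\cap (M\setminus \overline{B}(0,R))$. Then $f^n(K_R)\subset \overline{B}(0,R/2^n)$, so the ``bounded part'' of the image collapses toward $0$. For the far part $E_R$, I want to use flatness at infinity (not merely radial flatness): given $\eta\in(0,1)$, there is $R_1$ such that $d(f(x),f(y))\leq \eta\, d(x,y)$ whenever $d(x,0),d(y,0)>R_1$; combined with $f(\overline{B}(0,R))\subset \overline{B}(0,R/2)$ one keeps the orbit of a far point either far (and then it contracts by factor $\eta$ at each step, plus it can only move radially inward until it possibly enters $\overline{B}(0,R_1)$, after which it stays in $\overline{B}(0,R_1)$). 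The upshot should be: for $R$ large enough and $n$ large enough, $f^n(\supp(\gamma))\subset \overline{B}(0,\rho)$ for some fixed small $\rho$ — in fact $f^n(\supp(\gamma))\to\{0\}$ in the appropriate sense. More carefully, I'd argue that for each fixed $R\geq R_0$, $\diam$-type estimates give $f^n(\supp(\gamma))\subset \overline{B}(0, R/2^n + (\text{something involving }\eta^n))$; letting $n\to\infty$ yields $\supp(\gamma)\subset\{0\}$, the desired contradiction. (If the far-point bookkeeping is delicate, an alternative is: pick any $R\geq R_0$; every point of $\supp(\gamma)$ lies in $\overline{f^n(\supp(\gamma))}$; show $f^n(\supp(\gamma))\subset\overline{B}(0,\max(R_1,R/2^n))\to \overline{B}(0,R_1)$, hence $\supp(\gamma)\subset \overline{B}(0,R_1)$, contradicting unboundedness.)

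**Main obstacle.** The delicate point is controlling the orbit of a point $x$ that starts far from $0$: radial flatness alone tells us $f$ maps large balls into smaller balls, but a priori a point could ``escape to infinity'' under iteration before being pulled in — however, since $\Lip(f)<\infty$ and $f$ is radially contracting outside $\overline{B}(0,r_0)$, the orbit is in fact trapped: once $d(f^k(x),0)\leq r_1$ it stays $\leq r_1$ by the proof of Lemma~\ref{lemma:Rad}, and while $d(f^k(x),0)>r_0$ it strictly decreases by a factor $1/2$ each step, so after finitely many steps it enters $\overline{B}(0,r_1)$. Thus $\overline{f^n(\supp(\gamma))}\subset \overline{B}(0,r_1)$ for $n$ large (uniformly in the starting point, since the contraction factor $1/2$ is uniform and $\Lip(f)$ bounds how slowly a bounded-below orbit can be), which already forces $\supp(\gamma)\subset\overline{B}(0,r_1)$ — bounded. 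So actually radial flatness via Lemma~\ref{lemma:Rad} suffices for boundedness, and the hypothesis ``flat at infinity'' is there to match the compactness theorem's hypotheses / to later combine with Theorem~\ref{thm-spectrum} inside $\F_M(\overline{B}(0,r_1))$; the real work is just the uniform-trapping estimate.
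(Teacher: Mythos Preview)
Your approach has a genuine gap. The claim that $\overline{f^n(\supp(\gamma))}\subset \overline{B}(0,r_1)$ for all large $n$, \emph{uniformly in the starting point}, is false when $\supp(\gamma)$ is unbounded. A uniform contraction factor $1/2$ does not give a uniform entry time: a point $x$ with $d(x,0)\approx 2^k r_1$ needs roughly $k$ iterations before its orbit reaches $\overline{B}(0,r_1)$, and since you are assuming $\supp(\gamma)$ unbounded, no single $n$ works for all points simultaneously. Concretely, for any fixed $n$ there may be $x\in\supp(\gamma)$ with $d(x,0)>2^n r_1$ whose $n$-th iterate is still outside $\overline{B}(0,r_1)$; hence $f^n(\supp(\gamma))$ can remain unbounded for every $n$, and the inclusion $\supp(\gamma)\subset\overline{f^n(\supp(\gamma))}$ yields no contradiction. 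Your alternative suggestion has the same defect: the containment $f^n(\supp(\gamma))\subset\overline{B}(0,\max(R_1,R/2^n))$ would require $\supp(\gamma)\subset\overline{B}(0,R)$ to begin with, which is exactly what you do not have.

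The deeper issue is that your argument uses only radial flatness (as you note at the end), discarding the strictly stronger hypothesis of flatness at infinity; a purely set-theoretic support argument of this kind does not seem to reach the conclusion. The paper's proof is functional-analytic rather than support-based. One introduces bounded ``far-cutoff'' operators $T_n = w_n\widehat{Id}$ on $\F(M)$ (with $w_n$ a Lipschitz bump vanishing on $B(0,2^n)$ and equal to $1$ outside $B(0,2^{n+1})$, and $\|T_n\|\leq 4$); unbounded support guarantees $T_n\gamma\neq 0$. Pairing $\lambda\gamma = \widehat{f}(\gamma)$ against a functional $h_n = T_n^* g_n$ with $g_n$ norming for $T_n\gamma$, and using $f(\overline{B}(0,2^{n+1}))\subset\overline{B}(0,2^n)$ to kill the contribution of the near part $(I-T_n)\gamma$, one obtains
\[
|\lambda|\,\|T_n\gamma\| \;\le\; \|h_n\|_L \cdot \Lip\!\big(f\restricted_{\{0\}\cup(M\setminus B(0,2^n))}\big)\cdot \|T_n\gamma\| \;\le\; 4\varepsilon\,\|T_n\gamma\|.
\]
Flatness at infinity is precisely what makes the Lipschitz constant on the right arbitrarily small for large $n$, forcing $\lambda=0$. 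This is a norm estimate on $\gamma$ itself, not a statement about its support, and that is where the full hypothesis is actually used.
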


\begin{proof}
    For $n\in \mathbb{N}$, we define the map $w_n : M \to \R_+$ by
    \begin{equation*}
    \label{eq:T_h}
        w_n(x)=\begin{cases}
        0 & \text{if }  d(x,0) < 2^n \\
        2^{-n}d(x,0)-1 & \text{if } 2^n \leq  d(x,0)\leq 2^{n+1} \\
        1 & \text{if } d(x,0) > 2^{n+1}.
    \end{cases} \,
    \end{equation*}
    This map is $2^{-n}$-Lipschitz. Moreover $T_n := w_n \widehat{Id}$ defines a bounded operator on $\F(M)$ with $\|T_n\| \leq 4$. A proof of the latter fact can be found in \cite[Section~2.2]{AP23}.
    It is rather easy to see that $\supp\left(T_n(\gamma) \right) \subset M \setminus B(0, 2^n)$ for any $\gamma\in \F(M)$. Therefore we can see $T_n$ as a bouded operator from $\F(M)$ to $\F_M(M \setminus B(0, 2^n))$.

    Let $\lambda \in \C$ be an eigenvalue of $\widehat{f}$ associated to an eigenvector $\gamma$ with unbounded support. Let us fix $\ep>0$. Since $f$ is flat at infinity, $f$ is also radially flat. Hence, by Lemma~\ref{lemma:Rad}, there exists $R_0>0$ such that $\Lip(f\restricted_{\{0\} \cup M \setminus B(0,R_0)}) < \ep$ and $f(\B(0,R)) \subset \B(0 , \frac{R}{2})$ for every $R \geq R_0$. We fix $n \in \N$ such that $2^n \geq R_0$. For simplicity, we will write $C_n = M\setminus B(0,2^n)$.
    We have $T_n \gamma \in \F_M(C_n)$ and since the support is unbounded, $T_n \gamma \neq 0$. We let $g_n \in \Lip_0(\{0\} \cup C_n)$ of norm $1$ be norming for $T_n \gamma$. The function $h_n = T_n^*g_n = w_n g_n$ is supported in $C_n$.
    Next, write $\widehat{f}(\gamma) = \widehat{f}(T_n\gamma) + \widehat{f}((I-T_n)\gamma)$ and notice that $(I-T_n)(\gamma) = (1-w_n)\widehat{Id}(\gamma)$ is supported in $B(0,2^{n+1})$ (because $(1-w_n)$ is). In particular $\widehat{f}((I-T_n)\gamma)$ is supported in $\overline{f(\B(0, 2^{n+1}))} \subset \B(0,2^n)$. Hence,
    $
    \langle h_n, \widehat{f}((I-T_n)\gamma) \rangle = 0
    $
    which implies that $
    \langle h_n, \widehat{f}(\gamma) \rangle = \langle h_n, \widehat{f}(T_n(\gamma)) \rangle.
    $
    
To conclude, we note that
\begin{align*}
|\lambda| \|T_n \gamma\| = |\lambda \langle g_n, T_n \gamma \rangle |
 = |\langle h_n, \widehat{f}(\gamma) \rangle |
& = | \langle h_n, \widehat{f}(T_n(\gamma)) \rangle | \\
& \leq \|h_n\|_L \; \|f\restricted_{\{0\} \cup C_n} \|_L \|T_n \gamma\| \\
& \leq 4\ep \|T_n \gamma\|.
\end{align*}
This implies that $|\lambda | \leq 4\ep$. Since $\ep$ was arbitrary, we obtain $\lambda = 0$.
\end{proof}

Since when $\widehat{f} : \F(M) \to \F(M)$ is compact, $f$ is automatically radially flat (see Lemma~2.4 in \cite{ACP21}), we obtain the following corollary. 

\begin{corollary}
	\label{cor-spectrum-flat-at-infinity}
	Let $M$ be a complete metric space and $f : M \to M$ be a base point preserving Lipschitz map. Let $A$ be the set of all natural numbers $n$ such that $f$ has a periodic point of order $n$. If $f$ is flat at infinity and $\widehat{f} : \F(M) \to \F(M)$ is compact, then $A$ is finite and moreover
	$$\sigma(\widehat{f})\setminus\{0\} = \bigcup_{n \in A} \U_n .$$
\end{corollary}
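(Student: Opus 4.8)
The plan is to reduce the statement to the bounded case already settled in Corollary~\ref{cor-spectrum-compact}, using Theorem~\ref{thmFlatInf} to localize the eigenvectors. Throughout, I use that since $\widehat{f}$ is compact one has $\sigma(\widehat{f})\setminus\{0\}=\sigma_p(\widehat{f})\setminus\{0\}$, and that $0$ is the only possible accumulation point of $\sigma(\widehat{f})$.

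I would first dispose of the inclusion ``$\supset$'' together with the finiteness of $A$. For every $n\in A$, Lemma~\ref{lemmaPeriodic1} applied with $w\equiv 1$ gives $\U_n\subset\sigma_p(\widehat{f})$; since $0\notin\U_n$ this already yields $\bigcup_{n\in A}\U_n\subset\sigma(\widehat{f})\setminus\{0\}$. If $A$ were infinite, then $\bigcup_{n\in A}\U_n$ would have an accumulation point on the unit circle (for instance $e^{2\pi i/n}\to 1$ along $n\in A$), which is impossible since $0$ is the only accumulation point of the spectrum of a compact operator. Hence $A$ is finite.

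For the inclusion ``$\subset$'', fix $\lambda\in\sigma(\widehat{f})\setminus\{0\}=\sigma_p(\widehat{f})\setminus\{0\}$ and an eigenvector $\gamma\in\F(M)\setminus\{0\}$ with $\widehat{f}(\gamma)=\lambda\gamma$. Since $f$ is flat at infinity, Theorem~\ref{thmFlatInf} guarantees that $\supp(\gamma)$ is bounded. Flatness at infinity also implies radial flatness, so Lemma~\ref{lemma:Rad} provides $R_0>0$ with $f(\B(0,R))\subset\B(0,R/2)$ whenever $R\ge R_0$. I would then choose $R\ge R_0$ large enough that $\supp(\gamma)\subset N:=\B(0,R)$ and view $N$ as a complete bounded pointed metric space with the same base point $0$. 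Since $f(N)\subset\B(0,R/2)\subset N$, the map $f$ restricts to $f|_N:N\to N$, the subspace $\F_M(N)\cong\F(N)$ is invariant under $\widehat{f}$, and the restriction $\widehat{f}|_{\F_M(N)}$ coincides with $\widehat{f|_N}$ (both send $\delta(x)\mapsto\delta(f(x))$ for $x\in N$). Being the restriction of a compact operator to a closed invariant subspace, $\widehat{f|_N}$ is compact; as $N$ is bounded, \cite[Theorem~A]{ACP21} then tells us that $f|_N$ is uniformly locally flat and $f(N)$ is totally bounded, so the hypotheses of Corollary~\ref{cor-spectrum-compact} are met for $(N,f|_N)$.

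Finally, $\gamma\in\F_M(N)\setminus\{0\}$ satisfies $\widehat{f|_N}(\gamma)=\lambda\gamma$, hence $\lambda\in\sigma(\widehat{f|_N})\setminus\{0\}$, and Corollary~\ref{cor-spectrum-compact} yields $\lambda\in\U_m$ for some $m$ such that $f|_N$ has a periodic point of order $m$; such a point is also a periodic point of $f$ of the same order, so $m\in A$ and $\lambda\in\bigcup_{n\in A}\U_n$. This gives $\sigma(\widehat{f})\setminus\{0\}\subset\bigcup_{n\in A}\U_n$ and completes the proof. I expect the only step requiring any care to be the identification of $\widehat{f}|_{\F_M(N)}$ with the Lipschitz operator $\widehat{f|_N}$ and the fact that it inherits compactness; both are routine once one notes that $f(N)\subset N$, and everything else is a direct appeal to results already established.
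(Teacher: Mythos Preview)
Your proof is correct and follows the same strategy as the paper: use Theorem~\ref{thmFlatInf} and Lemma~\ref{lemma:Rad} to trap the support of any eigenvector inside an $f$-invariant closed ball $N$, then apply Corollary~\ref{cor-spectrum-compact} to the restriction $\widehat{f|_N}$. The only cosmetic difference is that you establish the finiteness of $A$ upfront via the accumulation-point argument, whereas the paper leaves this implicit in its appeal to Corollary~\ref{cor-spectrum-compact}.
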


\begin{proof}
	If $\widehat{f}(\gamma) = \lambda \gamma$ for some $\lambda \in \C \setminus \{0\}$ and $\gamma \in \F(M) \setminus \{0\}$, then $\supp(\gamma)$ is bounded by the previous theorem. Now, thanks to Lemma~\ref{lemma:Rad}, we may choose $R>0$ large enough so that $\B(0,R)$ contains $\supp(\gamma)$ and $f(\B(0,R)) \subset \B(0,R)$. Now apply Corollary~\ref{cor-spectrum-compact} to $\widehat{f\restricted_{\B(0,R)}}$ in order to get the desired conclusion. 
\end{proof}

\section{Final remarks and open questions}

\subsection{When the metric space is uniformly discrete and bounded}

Recall that a metric space $M$ is said to be uniformly discrete if there exists $\delta >0$ such that, $\forall x \neq y \in M$, $d(x,y) > \delta$. If $M$ is moreover bounded and separable (that is countable), then the sequence $(\delta(x))_{x \in M \setminus \{0\}}$ is a Schauder basis for $\F(M)$ (which is equivalent to the $\ell_1$-basis). On such a space, any map $f : M \to M$ is Lipschitz. If $w : M \to \mathbb{C}$ is a weight map and $f(0)=0$ or $w(0)=0$, then, it is readily seen that $w \widehat{f}$ is bounded if and only if $w$ is bounded.
\smallskip

In this setting, it is easy to check whether an element $\gamma\in \F(M)$ is zero by simply looking at its coefficients in the Schauder basis. In particular, contrary to the general case (see Remark~\ref{remark:injectivity}), it is much simpler to describe precisely when 0 belongs to the point spectrum:

\begin{proposition} \label{Prop-injectivity}
    Let $M = \{0\} \cup \{x_i \; : \; i \in \N\}$ be uniformly discrete, bounded and separable. Let $f : M \to M$ be any map and $w : M \to \C$ be a bounded weight such that either $f(0)=0$ or $w(0)=0$. Then
\begin{align*}
    0 \in \sigma_p(w\widehat{f})  \ \Longleftrightarrow & \ f \ \text{is not injective or there exists} \ x\in M\setminus \{0\} \ \text{such that either } f(x)=0  \text{ or }  w(x)=0.
\end{align*}
\end{proposition}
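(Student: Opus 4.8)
The plan is to exploit the fact that $(\delta(x_i))_{i\in\N}$ is a Schauder basis of $\F(M)$, equivalent to the canonical $\ell_1$-basis, in order to reduce the equation $w\widehat{f}(\gamma)=0$ to an explicit system of scalar equations on the coefficients of $\gamma$. Since $M$ and $w$ are bounded, $w\widehat{f}$ is a bounded operator, so for $\gamma=\sum_{i\in\N}a_i\delta(x_i)\in\F(M)$ (with $(a_i)\in\ell_1$) one has $w\widehat{f}(\gamma)=\sum_{i\in\N}a_iw(x_i)\delta(f(x_i))$, the series converging absolutely in norm because $\sum_i|a_i|\,\|\delta(f(x_i))\|\le\|w\|_\infty\diam(M)\sum_i|a_i|<\infty$. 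As $\delta(0)=0$, the terms with $f(x_i)=0$ vanish; regrouping the remaining ones according to the value of $f$ (legitimate by absolute convergence) and using uniqueness of basis expansions, the coefficient of $\delta(x_k)$ in $w\widehat{f}(\gamma)$ equals $\sum_{i:\,f(x_i)=x_k}a_iw(x_i)$. Hence $w\widehat{f}(\gamma)=0$ if and only if $\sum_{i:\,f(x_i)=x_k}a_iw(x_i)=0$ for every $k\in\N$, and $0\in\sigma_p(w\widehat{f})$ precisely when this system admits a nonzero solution $(a_i)\in\ell_1$.

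For the implication ``$\Leftarrow$'' I would exhibit explicit elements of $\ker(w\widehat{f})\setminus\{0\}$. If $w(x_0)=0$ for some $x_0\neq0$, then $\gamma=\delta(x_0)\neq0$ satisfies $w\widehat{f}(\gamma)=w(x_0)\delta(f(x_0))=0$; the same $\gamma$ works whenever $f(x_0)=0$ with $x_0\neq0$, since then $\delta(f(x_0))=\delta(0)=0$. If $f$ is not injective, choose distinct $u,v$ with $f(u)=f(v)$; we may assume $u,v\in M\setminus\{0\}$ (when one of them is the base point and $f(0)=0$, the other is mapped to $0$, which is the second case) and $w(u),w(v)\neq0$ (otherwise the first case applies). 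Then $\gamma:=w(v)\delta(u)-w(u)\delta(v)$ has nonzero coefficients $w(v)$ and $-w(u)$ on the two distinct basis vectors $\delta(u),\delta(v)$, so $\gamma\neq0$, while $w\widehat{f}(\gamma)=w(u)w(v)\big(\delta(f(u))-\delta(f(v))\big)=0$. In each case $0\in\sigma_p(w\widehat{f})$.

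For ``$\Rightarrow$'' I would argue by contraposition: assume $f$ is injective and $f(x)\neq0$, $w(x)\neq0$ for all $x\in M\setminus\{0\}$. Then for every $k$ the set $\{i:f(x_i)=x_k\}$ has at most one element (by injectivity of $f$), and each index $i$ lies in exactly one such set (since $f(x_i)\neq0$); hence the system above forces $a_iw(x_i)=0$, i.e. $a_i=0$, for every $i$, so the only $\gamma$ with $w\widehat{f}(\gamma)=0$ is $\gamma=0$, that is $0\notin\sigma_p(w\widehat{f})$. I expect the only delicate point to be the bookkeeping around the base point in the non-injectivity case, where the conditions on $f$ and $w$ at $0$ interact with the collapse $\delta(0)=0$; everything else is a direct computation in the $\ell_1$-basis.
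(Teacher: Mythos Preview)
Your argument is correct and matches the paper's almost line for line: both exploit the Schauder basis $(\delta(x_i))_{i\in\N}$, exhibit the same explicit kernel elements ($\delta(x_0)$ when $w$ or $f$ vanishes at $x_0\neq 0$, and a combination proportional to $w(v)\delta(u)-w(u)\delta(v)$ in the non-injective case), and for the converse use that injectivity of $f$ makes the family $(\delta(f(x_i)))_i$ linearly independent so that every coefficient $a_iw(x_i)$ must vanish. The base-point bookkeeping you flag is handled in the paper in the same way, by first disposing of the cases $f(x)=0$ or $w(x)=0$ and then taking the two colliding points to be nonzero.
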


\begin{proof}
``$\impliedby$" If $x\neq 0_M$ is such that $f(x)=0$ or $w(x)=0$ then $\delta(x)$ is obviously an eigenvector associated with the eigenvalue $0$. Assume now that $f$ and $w$ do not vanish outside of $\{ 0 \}$ and that $f$ is not injective. Then, there exist $x\neq y$, $y\neq 0$, such that $f(x) = f(y)$. Let $\gamma = \frac{w(x)}{w(y)} \delta(y) - \delta(x)$. Then $\gamma$ is nonzero and satisfies
$$
w\widehat{f}(\gamma) =  \frac{w(x)}{w(y)} w(y)\delta(f(y)) - w(x) \delta(f(x)) = w(x)\delta(f(x)) - w(x)\delta(f(x)) = 0, \ \text{  that is } 0 \in \sigma_p(w\widehat{f}).
$$

\noindent ``$\implies$" Assume that $0 \in \sigma_p(\omega\widehat{f})$ and that $f$ is injective. Let $\gamma = \sum_{i=1}^{+\infty} a_i \delta(x_i) \in \ker(w\widehat{f})$ with $\gamma\neq 0$. Then $w\widehat{f}(\gamma) =  \sum_{i=1}^{+\infty} a_i w(x_i) \delta(f(x_i)) = 0$. Since the $f(x_i), i \in \mathbb{N}$, are pairwise distinct, we deduce that $a_iw(x_i)\delta(f(x_i))=0$ for every $i\in \mathbb{N}$. Since $\gamma \neq 0$, there is $k$ such that $a_k \neq 0$ which implies that $f(x_k)=0$ or $w(x_k)=0$.
\end{proof}

Let us point out that, in this setting, if $w \equiv 1$ then $\widehat{f}$ is injective if and only if $f$ is so. The similar statement for surjectivity is true; More information can be found in \cite{GPP22}. Characterizing the surjectivity in the weighted case is slightly more intricate. Notice first that the weighted version of \cite[Proposition~2.1]{ACP20} holds true (in full generality), that is:
    $$  w\widehat{f} \text{ has dense range} \iff f(\mathrm{coz}(w)) \cup \{0\} \ \text{is dense in } M.$$
When the metric space $M$ is uniformly discrete, the statement on the right-hand side naturally becomes $f(\mathrm{coz}(w)) \cap M \setminus \{0\} = M \setminus \{0\}$. In particular, if $f(0) = 0$ and $w \widehat{f}$ is surjective then $f$ is also surjective. Furthermore, one has the next characterization of surjective weighted Lipschitz operators.

\begin{proposition}\label{Prop-surjectivity}
    Let $M = \{0\} \cup \{x_i \; : \; i \in \N\}$ be uniformly discrete, bounded and separable. Let $f : M \to M$ be any map and $w : M \to \C$ be a bounded weight such that either $f(0)=0$ or $w(0)=0$. Then the following are equivalent:
    \begin{enumerate}
        \item $w\widehat{f}$ is surjective.
        \item $M\setminus \{0\} \subset f(M)$ and:
            $ \exists c>0, \; \forall y \in M\setminus \{0\}, \; \exists x \in f^{-1}(\{y\}) \text{ such that } |w(x)|>c. $
    \end{enumerate}
\end{proposition}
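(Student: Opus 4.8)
The plan is to work throughout in the concrete identification $\F(M) \cong \ell_1$, where the Schauder basis $(\delta(x_i))_{i\in\N}$ is equivalent to the canonical $\ell_1$-basis; write $\|\sum_i a_i \delta(x_i)\| \asymp \sum_i |a_i| d(x_i,0)$, and recall that since $M$ is bounded and uniformly discrete the weights $d(x_i,0)$ are bounded above and below by positive constants, so $\F(M)$ is $\ell_1(M\setminus\{0\})$ up to an equivalent norm. The operator $w\widehat{f}$ acts by $\delta(x_i) \mapsto w(x_i)\delta(f(x_i))$ (with the convention $\delta(0)=0$), so in coordinates it is a weighted ``generalized shift'' driven by the map $f$.

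For the implication $(1)\Rightarrow(2)$: surjectivity forces dense range, and by the cited weighted version of \cite[Proposition~2.1]{ACP20} (which in the uniformly discrete case reads $f(\mathrm{coz}(w)) \supset M\setminus\{0\}$) we get $M\setminus\{0\}\subset f(M)$ and that every $y\neq 0$ has a preimage on which $w$ is nonzero. To upgrade ``nonzero'' to ``uniformly bounded below'', I would argue by contradiction using the open mapping theorem: if $w\widehat{f}$ is onto then it is open, so there is $c>0$ with $\B_{\F(M)}(0,c) \subset w\widehat{f}(\B_{\F(M)}(0,1))$ (after normalizing by the equivalence constants with $\ell_1$). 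Testing this on the basis vectors $\delta(y)$ for $y\in M\setminus\{0\}$: if $\gamma=\sum a_i\delta(x_i)$ satisfies $w\widehat{f}(\gamma)=\delta(y)$, then comparing coefficients, $\sum_{x_i \in f^{-1}(y)} a_i w(x_i) = 1$ while all other ``fibre sums'' vanish; since $\|\gamma\|$ is controlled, $\sum_{x_i\in f^{-1}(y)}|a_i|$ is bounded by a constant independent of $y$, whence $\max_{x_i\in f^{-1}(y)}|w(x_i)|$ is bounded below by a positive constant independent of $y$. That constant is the desired $c$ (up to the fixed $\ell_1$-equivalence factors).

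For the converse $(2)\Rightarrow(1)$: I would build a bounded right inverse explicitly. For each $y\in M\setminus\{0\}$ pick, using the hypothesis, a point $x(y)\in f^{-1}(\{y\})$ with $|w(x(y))|>c$, and define a linear map $R$ on the basis by $R\delta(y) = \frac{1}{w(x(y))}\delta(x(y))$. Then $w\widehat{f}\circ R = \mathrm{Id}$ on each basis vector, hence on all of $\F(M)$ once $R$ is shown bounded. Boundedness of $R$ is where the $\ell_1$ structure does the work: for $\eta = \sum_y b_y \delta(y)$,
\begin{align*}
\|R\eta\| = \Big\| \sum_y \frac{b_y}{w(x(y))}\delta(x(y)) \Big\|
\asymp \sum_y \frac{|b_y|}{|w(x(y))|} d(x(y),0)
\leq \frac{\sup_{x}d(x,0)}{c\,\inf_{x}d(x,0)} \sum_y |b_y| d(y,0)
\asymp \frac{\diam(M)}{c\,\delta} \|\eta\|,
\end{align*}
using $|w(x(y))|>c$, the two-sided bound on the weights $d(\cdot,0)$, and the fact that the points $x(y)$ are distinct for distinct $y$ (so no cancellation or collision can inflate the $\ell_1$-norm). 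Hence $R$ is bounded and $w\widehat{f}$ is surjective.

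The main obstacle is the quantitative step in $(1)\Rightarrow(2)$: passing from ``for each $y$ some preimage has nonzero weight'' to a \emph{uniform} lower bound $c$. Density of the range only gives the pointwise statement; one genuinely needs the open mapping theorem (equivalently, that a surjective bounded operator between Banach spaces is bounded below on a complement), and then one must track the $\ell_1$-norm of an arbitrary preimage $\gamma$ of $\delta(y)$ carefully — in particular noting that other basis vectors $\delta(z)$, $z\neq y$, impose the vanishing of all fibre sums $\sum_{x_i\in f^{-1}(z)} a_i w(x_i)$, which is what keeps $\|\gamma\|$ from being spuriously small while still forcing a large weight somewhere in $f^{-1}(y)$. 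Everything else is bookkeeping with the fixed equivalence constants between $\F(M)$ and $\ell_1$.
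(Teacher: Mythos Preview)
Your argument is correct. The direction $(2)\Rightarrow(1)$ is essentially identical to the paper's: both construct an explicit right inverse by choosing, for each $y\neq 0$, a preimage $x(y)\in f^{-1}(\{y\})$ with $|w(x(y))|>c$ and sending $\delta(y)\mapsto w(x(y))^{-1}\delta(x(y))$, then checking boundedness via the $\ell_1$-equivalence.

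For $(1)\Rightarrow(2)$ the approaches diverge. You stay in $\F(M)$ and invoke the open mapping theorem to get a preimage $\gamma=\sum_i a_i\delta(x_i)$ of $\delta(y)$ with $\ell_1$-norm bounded independently of $y$, then read off $1=\big|\sum_{f(x_i)=y} a_i w(x_i)\big|\leq (\sup_{x\in f^{-1}(y)}|w(x)|)\cdot\sum_i|a_i|$ to extract the uniform lower bound. The paper instead passes to the adjoint: surjectivity of $w\widehat{f}$ forces $wC_f$ to be bounded below on $\Lip_0(M)$, and since $\|\cdot\|_L\asymp\|\cdot\|_\infty$ here, testing on the indicator $g_n=\mathbbm{1}_{\{x_n\}}$ gives $1=\|g_n\|_\infty\leq C'\|wC_f(g_n)\|_\infty=C'\sup_{x\in f^{-1}(\{x_n\})}|w(x)|$ directly. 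The two routes are dual to one another---``$T$ is open'' versus ``$T^*$ is bounded below'' are equivalent formulations---but the adjoint version is slightly cleaner because the indicator test reads off the fibre supremum in one line, whereas your version has to unpack the coefficients of an abstract preimage. One small comment: your remark that the vanishing of the other fibre sums ``keeps $\|\gamma\|$ from being spuriously small'' is a red herring; those equations are not used in the estimate, which only needs $\sum_{x_i\in f^{-1}(y)}|a_i|\leq\sum_i|a_i|\lesssim\|\gamma\|$.
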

\begin{proof} 
    $(1) \impliedby (2)$. Let $\gamma \in \F(M)$. Then there exists (a unique) $(b_n)_n \in \ell_1$ such that $\gamma = \sum_n b_n \delta(x_n)$.  By assumption, for every $n\in \mathbb{N}$, $f^{-1}(\{x_n\}) \neq \emptyset$ and there exists $y_n \in f^{-1}(\{x_n\})$ such that $|w(y_n)| > c$. Define, for every $n$, $a_n = \frac{b_n}{w(y_n)}$. The inequality $|a_n| \leq \frac{|b_n|}{c}$ implies that $(a_n)_n \in \ell_1$ so that $\mu := \sum_n a_n\delta(y_n) \in \F(M)$. This element clearly satisfies $w\widehat{f}(\mu)=\gamma$, which proves the surjectivity of $w\widehat{f}$.

    $(1) \implies (2)$. Assume that $w\widehat{f} : \F(M) \rightarrow \F(M)$ is surjective. It is a classical fact that in such case, its adjoint operator $wC_f \ \Lip_0(M) \rightarrow \Lip_0(M)$ is an isomorphism onto its range, see \cite[Theorem 4.15]{Rudin}. 
    In particular, there exists a constant $C>0$ such that, for every $g\in \Lip_0(M)$, $\|g\|_L \leq C \|wC_f(g)\|_L$. Note that on such metric space, any bounded function is Lipschitz and $\| \cdot \|_L$ is equivalent to $\| \cdot \|_{\infty}$. Hence, the latter inequality reads $\|g\|_{\infty} \leq C' \|wC_f(g)\|_{\infty}$ for some constant $C'>0$. Let $n\in \mathbb{N}$ and define $g_n$ on $M$ by setting $g_n(0)=0$, $g_n(x_n)=1$ and $g_n(x_i)=0$ if $i\neq n$. 
    Then one has $wC_f(g_n)(x)=w(x)$ if $x\in f^{-1}(\{x_n\})$, and $0$ otherwise. In particular, we have
    $$
    1 \leq C' \Big( \underset{x\in f^{-1}(\{x_n\})}{\sup} \ |w(x)| \Big),
    $$
    which yields the existence of $x\in f^{-1}(\{x_n\})$ such that $|w(x)| > \frac{1}{2C'}=: c$, and concludes the proof.
\end{proof}

As a direct consequence of Proposition \ref{Prop-injectivity} and Proposition \ref{Prop-surjectivity}, we can characterize the belonging of $0$ to the spectrum of $w\widehat{f}$. In what follows, $\rho(w\widehat{f})$ denotes the resolvent set of $w\widehat{f}$.

\begin{corollary}\label{caraciso}
Let $M = \{0\} \cup \{x_i \; : \; i \in \N\}$ be uniformly discrete, bounded and separable. Let $f : M \to M$ be any map and $w : M \to \C$ be a bounded weight such that either $f(0)=0$ or $w(0)=0$. Then the following are equivalent:
    \begin{enumerate}
        \item $0 \in \rho(w\widehat{f})$, that is $w\widehat{f}$ is an isomorphism;
        \item $f$ is injective, $f(M\setminus \{0\}) = M\setminus \{0\}$, $w(M\setminus \{0\}) \subset \C\setminus \{0\}$ and $\underset{x\in M\setminus \{0\}}{\inf} \ |w(x)| > 0$.
    \end{enumerate}
\end{corollary}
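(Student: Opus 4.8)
The plan is to derive this from Propositions \ref{Prop-injectivity} and \ref{Prop-surjectivity}, the link being the bounded inverse theorem: since $\F(M)$ is a Banach space and $w\widehat{f}$ is bounded, $0\in\rho(w\widehat{f})$ (i.e.\ $w\widehat{f}$ is an isomorphism) if and only if $w\widehat{f}$ is a bijection, that is, if and only if it is both injective and surjective. So it is enough to prove that the conjunction ``$w\widehat{f}$ injective and surjective'' is equivalent to $(2)$.

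First I would translate each half. Injectivity of $w\widehat{f}$ amounts to $0\notin\sigma_p(w\widehat{f})$, so negating the equivalence in Proposition \ref{Prop-injectivity} it is equivalent to: $f$ is injective and, for every $x\in M\setminus\{0\}$, both $f(x)\neq0$ and $w(x)\neq0$. Surjectivity of $w\widehat{f}$ is, by Proposition \ref{Prop-surjectivity}, equivalent to: $M\setminus\{0\}\subset f(M)$ and there is $c>0$ such that every fiber $f^{-1}(\{y\})$ with $y\in M\setminus\{0\}$ contains a point $x$ with $|w(x)|>c$.

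The implication $(2)\Rightarrow(1)$ is then routine: if $f$ is injective with $f(M\setminus\{0\})=M\setminus\{0\}$, $w(M\setminus\{0\})\subset\C\setminus\{0\}$ and $\inf_{x\neq0}|w(x)|>0$, then all of the conditions above hold, the fiberwise bound being witnessed by $c:=\tfrac12\inf_{x\neq0}|w(x)|$. For the converse, assume $w\widehat{f}$ is injective and surjective; then injectivity of $f$ and $w(M\setminus\{0\})\subset\C\setminus\{0\}$ are immediate, and it remains to get $f(M\setminus\{0\})=M\setminus\{0\}$ and $\inf_{x\neq0}|w(x)|>0$. The inclusion $f(M\setminus\{0\})\subset M\setminus\{0\}$ follows from $f(x)\neq0$ for $x\neq0$; conversely, for $y\in M\setminus\{0\}$ surjectivity provides $x\in f^{-1}(\{y\})$ with $|w(x)|>c$, and since $f(0)=0$ or $w(0)=0$ this forces $x\neq0$, so $y\in f(M\setminus\{0\})$. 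Now $f(M\setminus\{0\})=M\setminus\{0\}$ together with injectivity of $f$ on $M$ forces $f(0)=0$; hence for each $y\in M\setminus\{0\}$ the fiber $f^{-1}(\{y\})$ is the singleton $\{f^{-1}(y)\}\subset M\setminus\{0\}$, and as $y$ ranges over $M\setminus\{0\}$ these exhaust $M\setminus\{0\}$, so the fiberwise condition ``$|w|>c$ somewhere on $f^{-1}(\{y\})$'' reads exactly $\inf_{x\neq0}|w(x)|\geq c>0$.

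I do not expect any genuine obstacle here: the only point needing a little care is the bookkeeping around the standing hypothesis ``$f(0)=0$ or $w(0)=0$'' — specifically noticing that injectivity and surjectivity together upgrade it to $f(0)=0$ — and matching the fiberwise constant of Proposition \ref{Prop-surjectivity} with the uniform lower bound on $|w|$.
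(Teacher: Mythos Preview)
Your proof is correct and follows exactly the approach the paper intends: the corollary is stated there as ``a direct consequence of Proposition \ref{Prop-injectivity} and Proposition \ref{Prop-surjectivity}'', and you have carefully unpacked this consequence, including the small bookkeeping step (upgrading ``$f(0)=0$ or $w(0)=0$'' to $f(0)=0$) needed to turn the fiberwise bound into a uniform lower bound on $|w|$.
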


\begin{remark}
In fact, we can derive from Proposition $3.9$ and Proposition $3.10$ from \cite{ACP23} the following characterization, which holds true for any pointed metric space $M$. The following are equivalent:
\begin{enumerate}
    \item $w\widehat{f}$ is an isomorphism;
    \item $wC_f$ is an isomorphism;
    \item $w$ does not vanish on $M \setminus \{0\}$, $f(M) \cup \{0\}$ is dense in $M$, $f$ is injective and the operator defined by
\[
  T \colon\begin{aligned}[t]
        \F(f(M)) &\longrightarrow \F(M)\\
        \delta(f(x)) &\longmapsto  \frac{1}{w(x)} \delta(x)
  \end{aligned} \ \ \ \text{ is bounded.}
\]
\end{enumerate}
Note that if $f(M) \cup \{0\}$ is dense in $M$, the space $\F(f(M))$ 
is dense in $\F(M)$ so that, when $T$ is bounded, it extends to a bounded operator on $\F(M)$ which is precisely the inverse of $w\widehat{f}$. Of course, when $M$ uniformly discrete, bounded and separable, $T$ is bounded if and only if $\frac{1}{w}$ is bounded on $M \setminus \{0\}$, that is $\underset{x\in M\setminus \{0\}}{\inf} \ |w(x)| > 0$, so we retrieve the result of Corollary \ref{caraciso}.
\end{remark}

When $M=\mathbb{N}\bigcup \{0\}$ (respectively $M = \Z$) with an appropriate metric $d$, by setting $f(0)=0$ and $f(n)=n-1$ one obtains that $\widehat{f}$ is conjugate to a unilateral weighted shift on $\ell_1$ (respectively a bilateral weighted shift on $\ell_1$); See Example \ref{Exshift} and, more generally \cite[Proposition 1.6]{ACP20}. In fact, the class of weighted Lipschitz operators $w\widehat{f} : \F(M) \to \F(M)$, for $M$ uniformly discrete bounded and separable, includes all kind of unilateral or bilateral weighted shifts on $\ell_1$. For such operators, a lot is known about the spectrum. We refer for instance to \cite{Daniello} and the references therein for results on the spectrum of unilateral and bilateral weighted shifts. In particular, \cite[Proposition 3.4]{Daniello} gives a precise localization of the point spectrum in the unilateral case.
\smallskip

The next result can be viewed as a localization of the point spectrum of $w\widehat{f}$. 
In what follows, $\D$ is the open unit disc in $\C$, and $\overline{\D}$ its closure.

\begin{proposition}\label{localizespectrum}
    Let $M = \{0\} \cup \{x_i \; : \; i \in \N\}$ be uniformly discrete, bounded and separable. Let $f : M \to M$ be a map,  let $w : M \to \C$ be a bounded weight and assume that $f(0)=0$ or $w(0)=0$. Then the point spectrum of $w\widehat{f}$ satisfies the following properties:
    \begin{enumerate}
        \item[$1.$] $\sigma_p(w\widehat{f}) \subset \overline{B}\big(0,\|w\|_{\infty}\big)$.
        \item[$2.$] If $\lambda \in \sigma_p(w\widehat{f}) \setminus \{0\}$ admits an associated eigenvector with finite support, then $|\lambda| \geq \inf_{x \in \supp(\gamma)} |w(x)|$. 
        \item[$3.$] If $f$ is injective and $\lambda \in \sigma_p(w\widehat{f}) \setminus \{0\}$ is an eigenvalue with no finitely supported eigenvector, then  $\inf_{x \in \supp(\gamma)} |w(x)| < |\lambda| <  \sup_{x \in \supp(\gamma)} |w(x)|$, for any eigenvector $\gamma$ associated with $\lambda$.\\
        In particular, if $w \equiv 1$ then there must be a finitely supported eigenvector associated with $\lambda$, and $\lambda$ is a root of unity.
    \end{enumerate}
\end{proposition}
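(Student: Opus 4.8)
The plan is to work throughout with the identification $\F(M) \cong \ell_1$ via $x_i \mapsto$ (a scalar multiple of) $\delta(x_i)$, so that any $\gamma \in \F(M)$ can be uniquely written as $\gamma = \sum_i a_i \delta(x_i)$ with $(a_i)_i \in \ell_1$ after absorbing the normalizing constants; the point is that ``$\gamma = 0$'' is equivalent to ``all coefficients vanish'', which lets me read off coefficientwise identities from the eigenvalue equation. For part $1.$, if $w\widehat{f}(\gamma) = \lambda\gamma$ with $\gamma \neq 0$, then comparing with Lemma~\ref{lemmaPeriodic2} (or directly) I get, for each index $i$ with $a_i \neq 0$, a relation of the form $\lambda a_i = w(x_j) a_j$ for the appropriate predecessor index $j$; iterating and using that the coefficients lie in $\ell_1$ (so $\sup_i |a_i|$ is attained or approached) forces $|\lambda|^{m} |a_i| \leq \|w\|_\infty^{m} |a_i|$ along the orbit, hence $|\lambda| \leq \|w\|_\infty$. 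More cleanly: $|\lambda| \leq \|w\widehat{f}\| = \|w\|_\infty$ by the remark at the start of Section~4 that boundedness of $w\widehat{f}$ is equivalent to boundedness of $w$ with norm control, combined with the elementary fact that any eigenvalue is bounded in modulus by the operator norm. I would use whichever of these two routes the paper has set up; the operator-norm route is shortest.

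For part $2.$, suppose $\lambda \neq 0$ has a finitely supported eigenvector $\gamma$ with $\supp(\gamma) = \{x_1,\dots,x_n\}$. By the first bullet of Lemma~\ref{lemmaPeriodic2}, each $x_i$ is then periodic of some order $m_i \leq n$ with $\lambda^{m_i} = \prod_{j=0}^{m_i - 1} w(f^j(x_i))$. Taking moduli gives $|\lambda|^{m_i} = \prod_{j=0}^{m_i-1} |w(f^j(x_i))| \geq \big(\inf_{x \in \supp(\gamma)} |w(x)|\big)^{m_i}$, where I use that the orbit $\{f^j(x_i) : 0 \leq j \leq m_i - 1\}$ is contained in $\supp(\gamma)$ (this is exactly the cycle-decomposition of $f$ on the support established in the proof of Lemma~\ref{lemmaPeriodic2}). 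Hence $|\lambda| \geq \inf_{x \in \supp(\gamma)} |w(x)|$, as claimed.

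For part $3.$, assume $f$ is injective and $\lambda \neq 0$ is an eigenvalue all of whose eigenvectors have infinite support; fix such an eigenvector $\gamma = \sum_i a_i \delta(x_i)$ (so infinitely many $a_i \neq 0$). Since $f$ is injective, the eigenvalue equation $\sum_i a_i w(x_i)\delta(f(x_i)) = \lambda \sum_i a_i \delta(x_i)$ can be read coefficientwise as a recursion $\lambda a_i = w(x_j) a_j$ linking each point of the support to its unique $f$-preimage inside the support; injectivity plus infinitude of the support means the support breaks into $f$-orbits that are either infinite forward orbits or bi-infinite orbits (no finite cycles, else one extracts a finitely supported eigenvector as in Lemma~\ref{lemmaPeriodic2}, contradicting the hypothesis). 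Along such an orbit, writing the relation as $|a_{i+1}| = |\lambda|^{-1}|w(x_i)|\,|a_i|$, the sequence $(|a_i|)$ is geometric-like with ratios $|w(x_i)|/|\lambda|$. If $|\lambda| \geq \sup_{x \in \supp(\gamma)}|w(x)|$ then all ratios are $\leq 1$ going forward, so $|a_i|$ is nonincreasing along an infinite forward orbit — this alone does not contradict $\ell_1$; the sharper observation is that if $|\lambda| > \sup |w|$ strictly fails we still need summability. The clean argument: summing $|a_i|^{?}$ — actually the decisive point is that for $\gamma \in \ell_1$ with infinite support, one shows both a forward and a backward summability obstruction, forcing $\inf_{\supp(\gamma)}|w| \leq |\lambda| \leq \sup_{\supp(\gamma)}|w|$, and the inequalities must be strict because equality would make some orbit's coefficient sequence constant in modulus (hence not summable over an infinite orbit) or would again produce, via a limiting/periodicity argument, a finitely supported eigenvector. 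The ``in particular'' is then immediate: if $w \equiv 1$ the strict inequalities $1 < |\lambda| < 1$ are impossible, so no eigenvector of $\lambda$ can have infinite support; thus every eigenvector is finitely supported, and part $2.$ (or directly Lemma~\ref{lemmaPeriodic2}) shows $\lambda \in \U_n$ for some $n$, i.e. $\lambda$ is a root of unity.

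The main obstacle I anticipate is part $3.$: turning the heuristic ``the coefficient sequence along an infinite orbit is geometric with ratio $|w|/|\lambda|$, hence must genuinely decay to be summable'' into a rigorous pair of strict inequalities. The care needed is (a) to handle bi-infinite orbits, where summability is required in both directions and pins $|\lambda|$ between the liminf and limsup of $|w|$ along the orbit, and (b) to rule out the boundary cases $|\lambda| = \inf|w|$ and $|\lambda| = \sup|w|$ — for this I would argue that equality would force, after passing to a subsequence along which $|w(x_i)| \to |\lambda|$ and using compactness/uniform discreteness of $M$ (so that such a subsequence of points is eventually constant, hence periodic), the existence of a genuine periodic cycle on which $\lambda$ satisfies the closed product relation, yielding a finitely supported eigenvector and contradicting the standing hypothesis of part $3.$
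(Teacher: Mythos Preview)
Part 2 is correct and matches the paper exactly, via Lemma~\ref{lemmaPeriodic2}.

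Part 1 has a real gap. Your operator-norm route assumes $\|w\widehat{f}\|=\|w\|_\infty$, but this is false on uniformly discrete bounded spaces: the Lipschitz-free norm is only \emph{equivalent} to the $\ell_1$ norm, not equal. For instance, on $M=\{0,1,2\}\subset\R$ with $f=\mathrm{id}$, $w(1)=1$, $w(2)=-1$, one has $\|\delta(2)-\delta(1)\|=1$ while $\|w\widehat{f}(\delta(2)-\delta(1))\|=\|\delta(2)+\delta(1)\|=3$, so $\|w\widehat{f}\|\geq 3>\|w\|_\infty=1$. The spectral-radius bound therefore only gives $|\lambda|\leq C\|w\|_\infty$ for some constant $C>1$. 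Your first route is too vague and does not handle non-injective $f$. The paper argues directly in coordinates: with $E_i=\{j:f(x_j)=x_i\}$ the eigenvalue equation reads $\lambda a_i=\sum_{j\in E_i}a_jw(x_j)$, hence $|\lambda|\,|a_i|\leq\|w\|_\infty\sum_{j\in E_i}|a_j|$; summing over $i$ and using that the $E_i$ are pairwise disjoint gives $|\lambda|\sum_i|a_i|\leq\|w\|_\infty\sum_k|a_k|$, which is the sharp bound.

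For part 3 your orbit strategy is the right one and matches the paper, but your strictness argument is incorrect. You propose that at the boundary $|\lambda|=\sup|w|$ one passes to a subsequence with $|w(x_i)|\to|\lambda|$ and uses uniform discreteness to conclude the points are ``eventually constant, hence periodic''. Uniform discreteness gives no such conclusion: a sequence in a uniformly discrete space need not be eventually constant unless it converges, and nothing forces convergence of these orbit points. The paper's argument is simpler and avoids any boundary analysis. Once the $\sigma$-orbit $\{\sigma^n(1)\}_n$ is shown to be infinite (a finite cycle would yield a finitely supported eigenvector, contradicting the hypothesis), its indices are pairwise distinct, so $a_{\sigma^n(1)}\to 0$ since $(a_k)\in\ell_1$. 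The recursion then gives
\[
\Big(\frac{|\lambda|}{\sup_{x\in\supp\gamma}|w(x)|}\Big)^n\leq \frac{|a_{\sigma^n(1)}|}{|a_1|}\longrightarrow 0,
\]
which forces the \emph{strict} inequality $|\lambda|<\sup_{x\in\supp\gamma}|w(x)|$ directly; the lower bound is obtained symmetrically with $\sigma^{-1}$.
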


\begin{proof}
First of all, notice that if $\gamma$ is an eigenvector of finite support associated with an eigenvalue $\lambda \neq 0$, then Lemma~\ref{lemmaPeriodic2} readily implies that 
$$ \inf_{x \in \supp(\gamma)} |w(x)| \leq |\lambda| \leq  \sup_{x \in \supp(\gamma)} |w(x)|,$$
which proves $2$.

$1.$  Assume that $\lambda \neq 0$ is an eigenvalue and $\gamma = \sum_{i=1}^{+\infty} a_i \delta(x_i)$ is an associated eigenvector with infinite support. 
Without loss of generality, we may assume that $a_i \neq 0$ for every $i \in \N$.
Then $w\widehat{f}(\gamma) = \lambda \gamma$ reads as follows:
$$ \lambda  \sum_{i=1}^{+\infty} a_i \delta(x_i) =   \sum_{i=1}^{+\infty} a_i w(x_i) \delta(f(x_i)). $$
Since $(\delta(x_i))_i$ is a Schauder basis, 
if we let $E_i := \{j \in \N \; : \; f(x_j) = x_i\}$ then we may write 
$$\lambda a_i\delta(x_i) = \sum_{j \in E_i} a_j w(x_j) \delta(f(x_j)) = \Big(\sum_{j \in E_i} a_j w(x_j) \Big) \delta(x_i).$$ Therefore 
$$ |\lambda| \; |a_i| = |\sum_{j \in E_i} a_j w(x_j)| \leq \Big(\sum_{j \in E_i} |a_j|\Big) \|w\|_{\infty}.$$
Summing the latter over $i \in \N$ yields
$$ |\lambda| \sum_{i=1}^{+\infty}|a_i| \leq  \sum_{i=1}^{+\infty} \Big( \sum_{j \in E_i} |a_j| \Big) \|w\|_{\infty} = \Big(\sum_{k=1}^{+\infty}|a_k| \Big) \|w\|_{\infty} \ \ \text{ which implies } | \lambda | \leq \|w\|_{\infty}. $$

$3.$ Next, suppose that $\lambda \neq 0$ is an eigenvalue with no finitely supported eigenvector. Let us fix an eigenvector $\gamma = \sum_{i=1}^{+\infty} a_i \delta(x_i) \in \F(M)$ with infinite support. Again, we can assume that $a_i \neq 0$ for every $i\in \mathbb{N}$, thus $\supp(\gamma)=M$.
Since $f$ is injective, each $E_j$ defined above is reduced to a point, so there exists $\sigma : \N \to \N$ bijective such that $a_{\sigma(i)} w(x_{\sigma(i)}) = \lambda a_i$ and $f(x_{\sigma(i)}) = x_i$. Applying this identity recursively, we obtain that for every $n\in \mathbb{N}$,
\begin{equation}\label{recursiveequation1}
a_{\sigma^n(1)}w(x_{\sigma^n(1)}) w(x_{\sigma^{n-1}(1)}) \cdots w(x_{\sigma(1)}) = \lambda^n a_1.
\end{equation}
Note that $\{  \sigma^n(1) \mid n\in \mathbb{N} \cup \{ 0 \} \}$ is infinite, otherwise, choosing $N \in \mathbb{N}$ minimal such that $\sigma^N(1) = 1$, the vector $\tilde{\gamma} = \sum_{k=1}^N a_{\sigma^k(1)}\delta(x_{\sigma^k(1)})$ would satisfy
\begin{align*}
w\widehat{f}(\gamma) = \sum_{k=1}^{N} a_{\sigma^k(1)} w(x_{\sigma^k(1)})  \delta(f(x_{\sigma^k(1)}))
& = \sum_{k=1}^{N} \lambda a_{\sigma^{k-1}(1)}  \delta(x_{\sigma^{k-1}(1)}) 
 = \lambda \sum_{j=0}^{N-1} \lambda a_{\sigma^{j}(1)}  \delta(x_{\sigma^{j}(1)})  
= \lambda \gamma.
\end{align*}
That is, $\tilde{\gamma}$ is a finitely supported eigenvector associated with $\lambda$, which contradicts our assumption. Hence, $\{  \sigma^n(1) \mid n\in \mathbb{N} \cup \{ 0 \} \}$ is infinite (which forces its elements to be pairwise distinct) so, since $(a_{\sigma^n(1) })_n \in \ell_1$, we must have $\lim_n a_{\sigma^n(1)} = 0$. To conclude, we use \eqref{recursiveequation1} to get $|\lambda|^n |a_1| \leq \|w\|_{\infty}^n |a_{\sigma^n(1)} |$. In other words:
$$
\left( \dfrac{| \lambda |}{\|w\|_{\infty}} \right)^n \leq \dfrac{|a_{\sigma^n(1)} |}{|a_1|} \underset{n\to +\infty}{\longrightarrow} 0, \ \ \text{ which implies that } | \lambda | < \|w\|_{\infty}.
$$

For the other inequality, there is nothing to prove if $\inf_{x \in M} |w(x)|=0$ since $\lambda \neq 0$. Assume that $\inf_{x \in M} |w(x)| >0$. Let $\mu = \sigma^{-1} : \mathbb{N} \to \mathbb{N}$ be the inverse of the bijecton $\sigma$ defined above. The equality $a_{\sigma(i)} w(x_{\sigma(i)}) = \lambda a_i$ can be rewritten $a_iw(x_i) = \lambda a_{\mu(i)}$ for every $i$, which implies that for any $n$,
$$
a_1 w(x_1) w(x_{\mu(1)}) \cdots w(x_{\mu^{n-1}(1)}) = \lambda^n a_{\mu^n(1)}.
$$
We deduce that
$$
\left( \dfrac{\inf_{x \in M} |w(x)|}{|\lambda|} \right)^n \leq \dfrac{|a_{\mu^n(1)}|}{|a_1|} \underset{n\to +\infty}{\longrightarrow} 0,
$$
which yields the desired inequality and concludes the proof.

In particular, if $w \equiv 1$ and $\lambda \in \sigma_p(\widehat{f}) \setminus \{0\}$, the inequality $\inf_{x \in \supp(\gamma)} |w(x)| < |\lambda| <  \sup_{x \in \supp(\gamma)} |w(x)|$ becomes $1<|\lambda|<1$, which is impossible, so there must be a finitely supported eigenvector for $\lambda$. By Lemma~\ref{lemmaPeriodic2}, this implies that $\lambda$ is a root of unity.
\end{proof}

\subsection{Open questions}

We conclude this note with two open questions.

\begin{question}
    Does Corollary~\ref{cor-spectrum-compact} still hold when $M$ is unbounded? 
	That is, if $\widehat{f} : \F(M) \to \F(M)$ is compact, then is it true that the set $A$, of all natural numbers $n$ such that $f$ has a periodic point of order $n$, is finite, and moreover
	$$\sigma(\widehat{f})\setminus\{0\} = \bigcup_{n \in A} \U_n ?$$
\end{question}

\begin{question} Similarly, can one obtain
a similar description of the the spectrum of compact weighed Lipschitz operators $w \widehat{f}$ with weaker assumptions on $f$ than in Theorem \ref{thm-spectrum}?
\end{question}

\noindent \textbf{Acknowledgements :} The third author was supported by the French ANR project No. ANR-20-CE40-0006.


\begin{thebibliography}{99}


\bibitem{ACP20} A. Abbar, C. Coine and C. Petitjean, \emph{On the dynamics of Lipschitz operators,} Integral Equations Operator Theory 93 (2021), no. 4, Paper No. 45, 27 pp.

\bibitem{ACP21} A. Abbar, C. Coine and C. Petitjean, \emph{Compact and weakly compact Lipschitz operators}, Proceedings of the Royal Society of Edinburgh: Section A Mathematics, 2023, vol. 153, no 3, p. 1002-1020.

\bibitem{ACP23} A. Abbar, C. Coine and C. Petitjean, \emph{A pre-adjoint approach on weighted composition operators between spaces of Lipschitz functions}, preprint, arXiv:2304.12173.

\bibitem{support1} R. J. Aliaga and E. Pernecká, \textit{Supports and extreme points in Lipschitz-free spaces}, Rev. Mat. Iberoam. 36 (2020), no. 7, 2073--2089. 

\bibitem{AP23} R. J. Aliaga, and E. Pernecká, \textit{Integral representation and supports of functionals on Lipschitz spaces},
Int. Math. Res. Not. IMRN (2023), no. 4, 3004--3072.

\bibitem{APPP2019} R. J. Aliaga, E. Perneck\'a, C. Petitjean and A. Proch{\'{a}}zka, \emph{Supports in Lipschitz-free spaces and applications to extremal structure},  J. Math. Anal. Appl. 489 (2020), no. 1, 124128, 14 pp. 

\bibitem{Daniello} E. D’Aniello and M. Maiuriello, \textit{On the spectrum of weighted shifts}, Rev. Real Acad. Cienc. Exactas Fis. Nat. Ser. A-Mat. 117:4 (2023).

\bibitem{GolMa22} Sh. Behrouzi, A. Golbaharan and H. Mahyar, \textit{Weighted composition operators between pointed Lipschitz spaces}, Results Math. 77 (2022), no. 4, Paper No. 157, 16 pp. 
	
\bibitem{Vargas2} M. G. Cabrera-Padilla, and A. Jim\'{e}nez-Vargas, \emph{A new approach on Lipschitz compact operators}, Topology Appl. 203 (2016), 22--31.	

\bibitem{GPP22} L. García-Lirola, C. Petitjean and A. Proch{\'{a}}zka, \textit{Injectivity of Lipschitz operators}, Bull. Malays. Math. Sci. Soc. 46, 68 (2023). 


\bibitem{GK} G. Godefroy and N. J. Kalton, \textit{Lipschitz-free Banach spaces}, Studia Math. 159 (2003), 121--141.

\bibitem{Kamo2} H. Kamowitz, \textit{Compact endomorphisms of Banach algebras}, Pacific J. Math. 89(2): 313--325 (1980).

\bibitem{Kamo} H. Kamowitz, S. Scheinberg, \emph{Some properties of endomorphisms of Lipschitz Algebras}, Studia Math. 96 (1990) 61--67.

\bibitem{Rudin} W. Rudin, \textit{Functional Analysis}, Second Edition, International Editions, Mc-Graw-Hill, New York, ISBN 0-07-100944-2.

\bibitem{Vargas1} A. Jim\'{e}nez-Vargas and M. Villegas-Vallecillos, \emph{Compact composition operators on noncompact Lipschitz spaces}, J. Math. Anal. Appl. 398 (2013), no. 1, 221--229.

\bibitem{Weaver2} N. Weaver, \emph{Lipschitz algebras}, 2nd ed., World Scientific Publishing Co., River Edge, NJ, 2018.
\end{thebibliography}
\end{document}